\DeclareMathAlphabet{\mathbfit}{OML}{cmm}{b}{it}
 \newcommand{\ba}{\begin{array}}
 \newcommand{\ea}{\end{array}}
 \newcommand{\bea}{\begin{eqnarray}}
 \newcommand{\eea}{\end{eqnarray}}
 \newcommand{\be}{\begin{equation}}
  \newcommand{\ee}{\end{equation}}
\newtheorem{theorem}{Theorem}[section]
\newtheorem{lemma}[theorem]{Lemma}
\newtheorem{proposition}[theorem]{Proposition}
\newtheorem{corollary}[theorem]{Corollary}
\newtheorem{definition}[theorem]{Definition}
\newtheorem{remark}[theorem]{Remark}
 \def \Z {{\mathbb Z}}
 \def \R {{\mathbb R}}
 \def \C {{\mathbb C}}
 \def \N {{\mathbb N}}
 \def \P {{\mathbb P}}
 \def \E {{\mathbb E}}
 \def \cG {\mathcal{G}}
 \def \cN {\mathcal{N}}
 \def \cT {\mathcal{T}}
 \def \a {{\alpha}}
 \def \b {{\beta}}
  \def \g {{\gamma}}
 \def \z {{\z}}
 \def \m {{\mu}}
 \def \n {{\nu}}
 \def \th {{\theta}}
 \def \o {{\omega}}
 \def \r {{\rho}}
 \def \z {{\zeta}}
 \def \G {{\Gamma}}
 \def \F {{\Phi}}
\def \à {{\`{a}}}
\def \ì{{\`{\i}}}
\def \ò{{\`{o}}}
\def \è{{\`{e}}}
\def \ù{{\`{u}}}
 \def \1{\mathbbm{1}} 
 \def\cov{\hbox{\rm Cov}}
 \def\var{\hbox{\rm Var}}
 \def\sgn{\hbox{\rm sgn}}
\def\sss{{\text{\wedn{s}}}}
\def\ooo{{\text{\wedn{o}}}}
\begin{document}
\title[Ladder costs for random walks in L\'evy random media]{Ladder costs for random walks\\ in L\'evy random media}

\author{Alessandra Bianchi}
\address{Dipartimento di Matematica, Universit\`a di Padova, Via Trieste 63, 35121 Padova, Italy.}
\email{alessandra.bianchi@unipd.it}

\author{Giampaolo Cristadoro}
\address{Dipartimento di Matematica e Applicazioni,
Universit\`a di Milano-Bicocca,
Via R. Cozzi 55, 20125 Milano, Italy.}
\email{giampaolo.cristadoro@unimib.it}

\author{Gaia Pozzoli}
\address{Department of Mathematics, CY Cergy Paris University, CNRS UMR 8088, 2 avenue Adolphe Chauvin, 95302 Cergy-Pontoise, France. }
\email{gaia.pozzoli@cyu.fr}

\date{}

\begin{abstract}
We consider a  random walk $Y$  moving on a \emph{L\'evy random medium}, namely a one-dimensional renewal point process with inter-distances between  points
that are in the domain of attraction of a stable law.
The focus is on the characterization of the law of the first-ladder height $Y_\cT$ and length $L_\cT(Y)$, where $\cT$ is the first-passage
time of $Y$ in $\R^+$.
The study relies on the construction  of a broader class of processes, denoted \emph{Random Walks in Random Scenery on Bonds} (RWRSB) that we briefly describe. The scenery is constructed  by associating two random variables with each bond of $\Z$, corresponding to the two possible crossing directions of that bond.
A random walk $S$ on $\Z$ with i.i.d increments  collects the scenery values of the  bond  it traverses: we denote this composite process the RWRSB.
Under suitable assumptions,
we characterize the tail distribution of  the sum of scenery values collected up to the first exit time $\cT$. 
This setting will be applied to obtain  results for the laws of the first-ladder length and height of $Y$.
The main tools of investigation are a generalized Spitzer-Baxter identity,
that we derive along the proof, and a suitable representation of the RWRSB
in terms of local times of the random walk $S$. All these results are easily generalized to the entire sequence of ladder variables.

    \par\bigskip\noindent
    {\it MSC 2010:} primary:   60G50, 60F05; secondary: 82C41, 60G55, 60F17. 
    \par\smallskip\noindent
    {\it Keywords:} Spitzer identities, first-passage, random walk in random scenery, stable distributions, L\'evy-Lorentz gas
    \end{abstract}

\maketitle

\section{Introduction}
An essential component of fluctuation theory in discrete time
is the study of the first-ladder height and time of a one-dimensional (1D)
random walk $S= (S_n)_{n\in\N_0}$, respectively given by the first maximal value
reached by $S$, and by the corresponding time.
In this context, the Wiener-Hopf techniques, introduced by Spitzer, Baxter and others,
offer a main tool of investigation, and allow for the derivation
of several fundamental identities that relate the distributions
of these first-ladder random variables  to that of the underlying random walk
(see \cite{Sp56, Sp57, Sp60}, and \cite{RWFT, Chung2001} for reviews).
These results, which have been established in different formulations by many authors,
open the way to a refined understanding of the first-ladder quantities of the ladder (ascending or descending) process, and of conditioned random walks (\cite{D95, DG93, AD, GOT, Cara}).

The aim of the present work is to generalize these kind of results
to random walks moving on a one-dimensional random medium having
i.i.d.\ inter-distances.
Specifically, the random medium that we consider is a renewal point process
$\omega = ( \omega_k)_{k \in \Z}$  with i.i.d.\ (positive) inter-distances
in the normal domain of attraction of a stable variable. The random walk  on the random medium
$\omega$  is then defined as $Y = (Y_n)_{n \in \N_0}$ , where $Y_n := \omega_{S_n}$ and  $S = ( S_n )_{ n \in \N_0}$
is  an underlying  random walk on $\Z$, independent of $\omega$.

Whenever the inter-distances are heavy-tailed random variables, 
the process $Y$ can be seen as a generalization of 
\emph{L\'evy flights}, which are random walks with  i.i.d.\ heavy-tailed jumps,
and as a discrete time version of the
\emph{L\'evy-Lorentz gas} (see \cite{BFK} and \cite{bcll, blp} for some related extensions).
All these processes have been receiving a surge of attention as
they model phenomena of anomalous transport and anomalous diffusion
(see, e.g. \cite{KRS, ZDK, acor, roac, Z} for some general or recent references).

From its definition, it turns out that the process $Y$ performs the same jumps
as $S$ but on the marked points of $\omega$ instead of $\Z$. Thus the first-ladder times of $S$ and $Y$ correspond, and we set
$$\cT:=\min\{n>0\,:\, S_n>0\}\equiv \min\{n>0\,:\, Y_n>0\}\,.$$
A complete characterization of the law of $\cT$ can then be derived
from the classical Spitzer-Baxter identities, and specifically from
the so called Sparre-Andersen identity \cite{SA1,SA2}
(see also \cite{Chung2001} for a general treatment, and \cite[\S\,17]{Sp76Book} for the specific lattice case).
In particular, if the random walk $S$ has symmetric jumps as in our definition,
then the law of $\cT$   is in the domain of attraction of a $1/2$-stable law.

The characterization of the distribution of $Y_{\cT}$,
the first-ladder height of $Y$, is in general an open problem,
as the double source of randomness creates a non-trivial dependence
between the increments of the process and makes the analysis of the corresponding
motion much harder than the classical independent case,
studied for example in \cite{S57, DG93}.
For the same reason, the law of the ascending ladder process
$(Y_{\cT_k})_{k\in\N_0}$, where $\cT_k$ is the time corresponding to
the $k$-th maximum value reached by $Y$, that is
$$\cT_0=0\,,\quad \cT_k:=\min\{n> \cT_ {k-1}\,:\, Y_n > Y_{\cT_{k-1}}\}
\quad \forall k\in\N\,$$
(with $\cT_1\equiv \cT$),
is in general unknown.

We will approach the problem by considering a slightly generalized
setting, appearing in several applications, in which a cost process $C$
is associated with a real (continuous or discrete) random walk $S$, that is assumed to be
the control process.
As a simple but paradigmatic example, suppose that each jump of the random walk takes a given and possibly random cost (e.g. time or energy) to be performed.
We could then be interested in the total cost accumulated when the walk reaches its first maximum, that is, the quantity $C_{\cT}$.
As a first step of our analysis, we will derive
a generalized  Spitzer-Baxter identity for $(\cT,C_{\cT})$ under the
assumption that the process $C$ has i.i.d.\ increments, possibly depending on
the control process (Theorem \ref{GSB}).
When the cost process is chosen to be exactly equal to $S$, we  recover the classical
Spitzer-Baxter identity. With different choices of $C$, we can
derive information on different types of first-ladder random variables
associated with the process, such as its first-ladder length.

We then move to \emph{Random Walks in Random Scenery on Bonds}. In this setting, as already mentioned, at each step the walker collects the scenery values
of the bond it traverses. It is manifest that these quantities can be seen, in the same spirit as in the previous part, as increments of a cost process $C$ associated with $S$.
On the other hand, such increments are now not i.i.d.\ and thus Theorem \ref{GSB} cannot be applied directly.
 However, assuming that $S$ has symmetric increments
and thanks to a representation of the process in terms of the local times of $S$,
we will be able to express the generating function of $C_\cT$
in a simpler form, which allows the implementation of the generalized
Spitzer-Baxter identities. The explicit results are derived from Tauberian theorems
under mild assumptions about the scenery process.
Then we will adapt the techniques used to analyze the first-ladder quantities
to obtain analogous results for the $k$-th ladder costs, $C_{\cT_k}$.

Finally, these results are applied to derive the tail distributions of the first-ladder length and height of the process $Y$.
In turn, the latter can be used to infer the law of the first-passage time of a generalized L\'evy-Lorentz gas.
\\

The paper is organized as follows.
Section \ref{MeB} is devoted to the rigorous definition of cost and control processes, random walks in random scenery on bonds and the related first-ladder quantities. At the same time, we provide the statement of the associated main results.
All the proofs of these theorems are presented in Section 3, together with some explicit applications to random walks in L\'evy random media and L\'evy-Lorentz gas.

\section{Setup and Main Results}\label{MeB}
Let us consider a process
$S:=( S_n)_{n \in \N_0}$ taking values on $\R$ and, for a fixed $\ell\in\N$,
a $\ell$-dimensional process  $C:=(C_n)_{n \in \N_0}$, which could depend non-trivially on $S$. We denote by $(\xi_k,\eta_k)_{k\in\mathbbm{N}}$ 
 the increments of the joint process $(S,C)$.
$C$ is referred to as the \emph{cost process} while $S$ is the \emph{control process}\footnote{It is apparent that we can equivalently define an  $(\ell+1)$-dimensional process and choose an arbitrary coordinate to play the role of the control process and the remaining $\ell$'s  as the cost process. We prefer to stick to a more explicit notation for the sake of  clarity.}.  
We avoid explicitly giving the dependencies on $S$ of the cost process, unless necessary.
 \\

\noindent \textbf{Example 1.} \emph{As a simple but paradigmatic example  to be used in next sections, consider the  one-dimensional cost process obtained by choosing   $\eta_k=|\xi_k|$, that is
\be\label{costLength}
C_n(S)=\sum_{k=1}^{n} |S_k-S_{k-1}|\,=:L_n(S),\qquad \forall n\in \N_0 \,.
\ee
It is manifest that  $L_n(S)$ measures the total length of the walker after $n$ steps.
}\\

We define  the \emph{first-ladder time} in  $(0,\infty)$ of $S$
(or first-passage time  of $S$) as
 \be\label{def-FP}
 \cT := \min\{ n>0 \: : \: S_n > 0 \}\,
 \ee
and the corresponding \emph{first-ladder height (or leapover)} as the control process stopped at $\cT$, i.e. $S_\cT$. In the same spirit,  we can define the
\emph{first-ladder cost} as the value of the cost process ${C}$  stopped at $\cT $, i.e. ${C}_\cT$.
With the choice \eqref{costLength},
$L_{\cT}(S)$ is the  \emph{first-ladder length} of  $S$, that is, the length of the process $S$ up to its first-passage in $(0,\infty)$.
 \\

We now give an overview of our main results, characterizing the law of $C_\cT$ under different assumptions.  The first result is an explicit expression
for the joint generating function of $(\cT, S_\cT, {C}_\cT)$
 under the assumption that the joint process $(S,{C})$ has i.i.d.\ increments.
This result will be instrumental for the analysis of first-ladder
quantities related to the random walk in random media $Y$,
though not directly applicable to it.
We then introduce a general process, called \emph{Random Walk in Random Scenery on Bonds}  from its analogy to Random Walks in Random Scenery \cite{KS}, so to obtain a suitable representation of $Y_\cT$ and of $L_\cT(Y)$ in this setting. This new process, for which we will state our main result, can be seen as a cost process coupled with $S$, having dependent increments also depending on a
random scenery assigned to the bonds of $\Z$.
Finally, as explicit applications of the main result, we derive the law of
the first-ladder quantities for the random walks in L\'evy random media.
\newline

\subsection{Cost process with i.i.d.\ increments}
The investigation of first-ladder time  and  height  of a 1D random walk is nowadays a well-established topic in fluctuation theory.  Among well-known results,  that are derived under the assumption of independent and identically distributed increments of the walk $S$, the Spitzer-Baxter identity  provides an explicit formula for the generating function of the  first-ladder time $\cT$ and height $S_{\cT}$:
\begin{eqnarray}\label{classicalSB}
\mathbb{E}\left[z^{\cT} e^{it S_{\cT}}\right]
&=& 1-\exp \left(-\sum_{n=1}^{\infty} \frac{z^{n}}{n} \int_{\left\{S_{n} > 0\right\}}
e^{i t S_{n}} d \mathbb{P}\right)\,.
\end{eqnarray}

In the same spirit of these classical results,  assuming that the process $(S, {C})$  is the sum of i.i.d.\ random variables\footnote{It is worth stressing that we are not assuming that ${\eta}_i$ and $\xi_i$ are independent.}
$(\xi_k, {\eta}_k)_{k \in \N}$
we derive an identity akin to the Spitzer-Baxter identity for the joint control and cost processes:

\begin{theorem}\label{GSB}
Suppose that the joint  process  $(S, {C})$ has i.i.d.\ increments.
Then, for any $t\in \R$,  $s \in \R^{\ell}$  and $z\in(0,1)$,
\begin{eqnarray}
\label{gSB-1}
\mathbb{E}\left[ z^{\cT} e^{i t S_{\cT}} e^{i {s}\cdot {C}_{\cT}} \right]
&=& 1-\exp \left(-\sum_{n=1}^{\infty} \frac{z^{n}}{n} \int_{\left\{S_{n} > 0\right\}}
e^{i t S_{n}}e^{i {s} \cdot {C}_{n}} d \mathbb{P}\right)
\,,
\\
\label{gSB-2}
\mathbb{E}\left[\sum_{n=0}^{\cT-1} z^{n}  e^{i t S_{n}} e^{i {s}\cdot {C}_{n}}  \right] &=&\exp \left( +\sum_{n=1}^{\infty}  \frac{z^{n}}{n} \int_{ \left\{ S_{n} \le 0 \right\}} e^{i t S_{n}}e^{i {s} \cdot {C}_{n}} d \mathbb{P}\right)\,.
\end{eqnarray}	
 \end{theorem}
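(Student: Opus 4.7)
The strategy is to follow the classical derivation of the Spitzer-Baxter identity (\ref{classicalSB}) while carrying the auxiliary Fourier variable $s\in\R^\ell$ associated with the cost process as an extra spectator throughout the argument. Because $(\xi_k,\eta_k)_{k\in\N}$ are i.i.d.\ in $\R^{\ell+1}$, the pair $(S,C)$ is itself a random walk on $\R^{\ell+1}$, and $\cT$ is a stopping time for its natural filtration depending only on the first coordinate $S$. This structural feature is precisely what allows the classical combinatorial and Wiener-Hopf arguments to go through essentially unchanged.

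As a first step I would introduce the joint characteristic function $\phi(t,s):=\E[e^{it\xi_1+is\cdot\eta_1}]$ of the increment, together with
\[
E^+(z,t,s):=\sum_{n=1}^{\infty}\frac{z^{n}}{n}\,\E\!\left[e^{itS_n+is\cdot C_n}\mathbf{1}_{\{S_n>0\}}\right],\quad
E^-(z,t,s):=\sum_{n=1}^{\infty}\frac{z^{n}}{n}\,\E\!\left[e^{itS_n+is\cdot C_n}\mathbf{1}_{\{S_n\le 0\}}\right].
\]
Expanding $-\log(1-z\phi(t,s))=\sum_{n\ge1}\frac{z^{n}}{n}\phi(t,s)^{n}$ and splitting the $n$-th expectation according to the sign of $S_n$ yields the preliminary identity $E^+(z,t,s)+E^-(z,t,s)=-\log(1-z\phi(t,s))$.

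The heart of the proof is to establish (\ref{gSB-1}), i.e.
\[
1-\E[z^\cT e^{itS_\cT+is\cdot C_\cT}]=\exp(-E^+(z,t,s)).
\]
I would prove this by the Spitzer-Feller cyclic (rotation) argument applied jointly: for each fixed $n\ge 1$ consider the $n$ cyclic rotations $\sigma$ of the random tuple $((\xi_1,\eta_1),\ldots,(\xi_n,\eta_n))$ and the corresponding rotated partial sums $(S^{\sigma}_k,C^{\sigma}_k)$. By joint exchangeability, all rotated sequences have the same law as the original, and the terminal value $(S^\sigma_n,C^\sigma_n)=(S_n,C_n)$ is rotation-invariant. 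The classical combinatorial identity that equates the number of rotations under which the first strict ascending ladder occurs at time $n$ to a function of $\mathbf{1}_{\{S_n>0\}}$ then carries over, with the factor $e^{is\cdot C_n}$ unaffected because it only depends on the rotation-invariant terminal $C_n$. Summing over $n$, invoking the strong Markov property of $(S,C)$ at the stopping time $\cT$, and re-exponentiating yields the claimed identity.

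Finally, (\ref{gSB-2}) would follow from a Wiener-Hopf renewal decomposition: iterating the strong Markov property of $(S,C)$ at the successive strict ascending ladder times $\cT_k$ of $S$ gives
\[
\frac{1}{1-z\phi(t,s)}\;=\;\frac{1}{1-\E[z^\cT e^{itS_\cT+is\cdot C_\cT}]}\cdot\E\!\left[\sum_{n=0}^{\cT-1}z^{n}e^{itS_n+is\cdot C_n}\right],
\]
and combining this with (\ref{gSB-1}) and with the preliminary identity $E^+ + E^-=-\log(1-z\phi)$ produces the stated formula. The main obstacle is justifying the Spitzer-Feller rotation argument in the multidimensional setting where the coordinates $\xi_k$ and $\eta_k$ may be arbitrarily dependent; this works precisely because the i.i.d.\ assumption is imposed on the \emph{pairs} $(\xi_k,\eta_k)$ rather than separately on the components, so both joint exchangeability and rotation-invariance of the terminal total $(S_n,C_n)$ are preserved.
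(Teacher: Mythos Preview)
Your proposal is correct but follows a genuinely different route from the paper. The paper proceeds \`a la Chung (Section 8.4): it writes the full generating series $\sum_{n\ge 0} z^n \E[e^{itS_n+is\cdot C_n}]=(1-z\phi_{\xi_1,\eta_1}(t,s))^{-1}$, factors it as $f_+^{-1}f_-$ with $f_\pm=\exp(\mp E^\pm)$, and \emph{independently} derives the same renewal decomposition you wrote down (splitting the sum at $\cT$ and using the strong Markov property). This produces an equation of the form $P\,Q^*=P^*\,Q$ where, as power series in $z$, the coefficients of $P,P^*$ are Fourier transforms of measures supported on $(0,\infty)$ in the $t$-variable and those of $Q,Q^*$ on $(-\infty,0]$. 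A Wiener--Hopf uniqueness lemma (Lemma~\ref{Wiener-Hopf}) then forces $P=P^*$ and $Q=Q^*$, yielding both identities simultaneously. Your plan instead establishes (\ref{gSB-1}) directly via the Spitzer--Feller cyclic argument---legitimate because the terminal pair $(S_n,C_n)$ is rotation-invariant and only the $S$-coordinate governs the ladder event---and then reads off (\ref{gSB-2}) from the same renewal decomposition combined with $E^++E^-=-\log(1-z\phi)$. The paper's approach avoids the combinatorial lemma entirely and treats the two identities symmetrically; your approach makes more explicit why the cost variable is a pure spectator, at the price of importing the rotation machinery, which for the \emph{full} identity with $e^{itS_\cT}$ (rather than just the Sparre--Andersen statement for $\cT$) is somewhat heavier to make rigorous than your sketch suggests.
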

\vspace{1cm}
As in the classical setting, the identities (\ref{gSB-1}) and (\ref{gSB-2}) allow to determine the laws of $\cT, S_\cT, C_\cT$ from the knowledge of quantities that do not depend on the first-passage event (right-hand sides of the identities).

\subsection{Random Walk in Random Scenery on Bonds}
Let $\zeta^{\pm} = (\zeta^{\pm}_k)_{k \in \Z} $ be two sequences of i.i.d.\ real-valued random variables defining the random scenery:  $\zeta^{+}_k$
and $\zeta^{-}_k$ are the values of the scenery at bond $k$ --- the edge between $k-1$ and $k$ --- of $\Z$.
In the following, we consider the random walk 
$S$ with i.i.d.\ symmetric increments $\xi_k$'s
taking values in \( \mathbb{Z} \), 
independent of the sequences \( \zeta^{\pm} \). 
The sequences \( \zeta^{\pm} \) may, in general, be dependent, as illustrated in the applications discussed in Subsection \ref{subsec_firstladder}.

We then consider the cost process $C= (C_n)_{n\in\N_0}$, depending
on $S$ and  $\zeta^{\pm}$,  such that $C_0=0$ and, for $n\in\N$,
\be
\label{cost-RWRE}
C_n : =\sum_{k=1}^n \eta_k\,,\quad
\qquad\mbox{ with } \quad
 \eta_k=\begin{cases}
 \displaystyle\sum_{j=S_{k-1}}^{S_k-1}\z^+_{j+1}\,,\qquad &  \textrm{if}\quad \xi_k>0\,,\\
 \\
  \displaystyle 0 &  \textrm{if}\quad \xi_k=0\,,\\
  \\
 \displaystyle\sum_{j=S_k}^{S_{k-1}-1}\z^-_{j+1}\,,\qquad &  \textrm{if}\quad \xi_k<0\,.
\end{cases}
\ee
Basically, each $\eta_k$  collects all the scenery values corresponding to the bonds that have been crossed in the corresponding jump $\xi_k$ of $S$, while $\zeta^{+}$ determines the weight associated with the bond traversed to the right and $\z^-$ with the bond traversed to the left.
In particular, the cost process $C$ depends on both $\z^{\pm}$ and $S$ and is called Random Walk in Random Scenery on Bonds (RWRSB). 
For specific choices of the random scenery, this class of processes includes  
the family of random walks on L\'evy media on which Subsection 2.3 is focused.

Note that the presence of the random scenery breaks down the i.i.d.\ assumption of the
 generalized Spitzer-Baxter identity stated in Theorem \ref{GSB}.
We will show how to leverage the results for first-ladder quantities associated with the control process $S$ to infer properties on the first-ladder quantity $C_\cT$ in this setting. 
As will be clear in the proof, to characterize  $C_\cT$  we need to also consider the even part of the scenery random variables, defined as 
$$\zeta^0_k\coloneqq \frac{\zeta^+_k+\zeta^-_k}2, \quad \forall k \in \Z\,.$$
We are then led to work with the following general assumptions.
\\
\textbf{Assumptions.}
\begin{itemize}
\item[a1.] Assume that S is a random walk on $\Z$ with  i.i.d.\ symmetric increments in the normal 
basin of attraction of a  $\beta$-stable distribution, with $\beta\in(0,2)$.
\item[a2.] Assume that the random variables $\z^+_k$'s and $\z^0_k$'s are non-negative (or non-positive),
and that they belong to the normal domain of attraction of stable random variables (including the degenerate case) with indices 
$\gamma_+\in(0,2]$ and $\gamma_0\in(0,2]$ respectively.
\end{itemize}
We refer the reader to Appendix \ref{app-stable}, where we have gathered essential definitions 
and results on random variables in the domain of attraction of a stable distribution, 
which will be used throughout the paper.

To state the main theorem, it is also convenient to define the following constants
that will be used throughout the paper:\\
\textbf{Notation.}
\begin{itemize}
\item If  $X$  is a random variable in the domain of attraction of an $\alpha$-stable law, 
with $\alpha \in (0,2]$, we set $\hat{\alpha}:=\textrm{min}\{1,\alpha\}$. 
If $X\equiv0$, we set $\hat{\alpha}=+\infty$;
\item In the above setting, we write
\begin{equation}\label{notation}
\r_+\coloneqq \hat{\g}_+\,\b /2,   \quad \, \r_0 \coloneqq \hat{\g}_0\, \hat{\b}/2.
\end{equation}
\end{itemize}
Note that $\r_+$ as well as $\r_0$ involve the stability indexes of both the scenery values and the underlying random walk. 
Indeed, we heuristically expect that the asymptotic tail of the ladder cost should receive contributions from both the elements of randomness, as the RWRSB interlaces 
(otherwise independent) processes.  
To better grasp the role of such exponents,  it is helpful to gain some intuition about the different terms that contribute to the cost $C_\cT$ accumulated up to the first-ladder time. 
First of all note that, for the part of the walk in the negative semi-axis, the random walker  traverses each bond an even number of times. Indeed, all bonds in the negative semi-axis that are traversed once to the left must be also traversed once to the right since the walker has to eventually pass over the origin to first land on the positive semi-axis. In this part of the walk, the effective cost of the $k$-th bond is thus the \emph{average} $\zeta^0_k$ of the directional costs, which has stability index ${\gamma}_0$.
These averaged costs will be collected for a number of times equal to $(L_\cT-S_\cT)/2$, which has a power-law decay with exponent $\hat{\beta}/2$ (see Lemma~\ref{lemma 1}). Hence, the contribution to $C_\cT$ from the part of the walk on the negative semi-axis combines these two terms, resulting (for technical reasons) in a stability index $\rho_0$.
In contrast, for the segment of the walk on the positive semi-axis, there are $S_\cT$  bonds that are traversed only once and in the right direction. This means that for this part only $\zeta^+$ counts, with stability index ${\gamma}_+$, and since $S_\cT$ has stability index $\beta/2$, this contribution to $C_\cT$ will have stability index $\rho_+$.
The technical reasons behind the introduction of the indices $\hat\gamma_0,\hat\gamma_+$ can be motivated by the following observation: 
when $\E(\z^{\pm}_1)<\infty$, as expected, the exponent of the asymptotic tail of the first-ladder cost is ruled solely by the properties of the underlying random walk $S$.
From this heuristics, we expect that the asymptotic tail of the first-ladder cost is the result of the competition between the two contributions $\rho_0$ and $\rho_+$ described above, 
as substantiated in the proof of the following theorem.\footnote{Throughout this paper, given two functions $f(x)$ and $g(x)$ we write $f(x)\sim g(x)$ if $\lim_{x\to\infty}f(x)/g(x)=1$. Moreover, we write $f(x)\asymp g(x)$ if $\exists\,c_1,c_2>0$ such that $c_1\leq \liminf_{x\to\infty}f(x)/g(x)\leq \limsup_{x\to\infty}f(x)/g(x)\leq c_2$.}
\begin{theorem}\label{MainResult1} 
 Let $C$  be the RWRSB defined in Eq.~\eqref{cost-RWRE} under Assumptions a1. and a2.
Then, there exist slowly varying functions $K(x),K_1(x)$ and  $K_2(x)$ such that
\begin{itemize}
\item  if  $\r_+ <  \r_0$, we have
\[
\P(C_{\cT}>x)\sim K(x)\,x^{-\r_+}\,, \qquad \mbox{as } x\to\infty\,;
\]
\item if  $\r_+\ge \r_0$ and $\beta\in[1,2)$, we get
\[
K_1(x)\,x^{-\min\{\r_+,\hat\g_0,1/2\}}\leq \P(C_{\cT}>x)\leq K_2(x)\,x^{-\r_0}\,,\qquad \mbox{as } x\to\infty\,,
\]
\end{itemize}
where for all  $\g_0\in[1,2]$, the lower tail exponent  matches 
$\r_0=1/2$ .
\end{theorem}
\vspace{0.5cm}

We anticipate that a slightly more general theorem is valid under suitable assumptions (see Theorem~\ref{MainResultExtended} 
and Remark \ref{rmk:cauchy}):
for the ease of the reader, we postpone the technical details and the complete statement to the dedicated section.

\subsection{First-ladder quantities for the random walks in L\'evy random media $Y$.
}\label{subsec_firstladder}
Let $\z := (\z_k)_{k \in \Z}$ be a sequence of i.i.d.\ positive random variables,
whose common distribution belongs to the normal basin of
attraction of a $\gamma$-stable distribution, with $0 < \g \leq 2$ and
 $\g\neq 1$ for simplicity (see Remark~\ref{rmk:cauchy}).
The recursive sequence of definitions
\be \label{PP}
  \o_0 := 0 \, , \qquad \o_k - \o_{k-1} := \z_k \,, \quad \mbox{for } k
  \in \Z \,,
\ee
determines a marked \emph{point process} $\o := ( \o_k)_{k\in\Z}$
on $\R$, which we call the \emph{random medium}.
For a fixed $\o$, and a random walk $(S_n)_{n\in\N_{0}}$ on $\Z$ as given in Assumption a1. above, we define the discrete time process
$Y := (Y_n)_{n \in \N_0}$ setting
\be\label{Y}
  Y_n \equiv Y_n(\o,S) := \o_{S_n}\qquad \forall n \in \N_0\,.
\ee
In simple terms, $Y $ performs the same jumps
as $S$ but on the points of $\o$, thus it is called \emph{random walk on the random medium}.

The presence of the random medium creates a dependence between the
 increments  of the process and provides a more realistic model of motion in
  inhomogeneous media with respect to the classical hypothesis of i.i.d.\ jumps.
However, as before, the double source of randomness makes the analysis of the model
much harder than the classical independent case, and even standard results of the classical theory of random walks, such as central limit theorems, have been only recently obtained under suitable
hypotheses (\cite{bcll,br}).
On the other hand, it is easy to see that the first-ladder time  $\cT$ is the same for both $Y$ and the underlying random walk $S$.
Our aim is thus to characterize the asymptotic law of the first-ladder height
and length of $Y$ (see Example 1.), denoted respectively by $Y_\cT$ and $L_\cT(Y)$.

Note that $(Y_n)_{n\in\N_0}$ can be seen as a RWRSB driven by $S$ with scenery 
$\z^+=-\z^- \equiv \z$, and hence with indices $\gamma_+=\gamma$ and $\hat\gamma_0=+\infty$, which ensures
that $\r_+<\r_0$ (see notation \eqref{notation}).\\
Similarly, $(L_n(Y))_{n\in\N_0}$ can be seen as a RWRSB driven by $S$ with scenery $\z^+=\z^-\equiv\z$, 
and therefore with indices $\g_+=\g_0=\g$, which imply $\r_+\ge \r_0$ (see notation \eqref{notation}).  
As an application of Theorem \ref{MainResult1}, we then get the following results:
\begin{corollary}\label{leapoverY-app}
In the above notation, for any $\beta\in(0,2)$ and $\gamma\in(0,2]\backslash\{1\}$, 
it holds that
\be
\P(Y_\cT> x) \sim Kx^{-\hat{\g}\b/2}\,,
\qquad \mbox{as } x\to\infty\,\,.
\ee
where $K$ is an explicit constant (see Eqs. \eqref{leapoverY-app-case1}, \eqref{leapoverY-app-case2}).
\end{corollary}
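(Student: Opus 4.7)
The first move is to realize $Y$ as a RWRSB driven by $S$, so that Theorem \ref{MainResult1} applies. Setting $\z^+_k := \z_k$ and $\z^-_k := -\z_k$, the cost increment $\eta_k$ defined in \eqref{cost-RWRE} telescopes for every sign of $\xi_k$: when $\xi_k > 0$,
\[
\eta_k \;=\; \sum_{j=S_{k-1}}^{S_k-1}\z_{j+1} \;=\; \o_{S_k}-\o_{S_{k-1}};
\]
when $\xi_k < 0$, the sum carries a minus sign and swapped limits and again equals $\o_{S_k}-\o_{S_{k-1}}$; when $\xi_k = 0$, both sides vanish. Summing over $k$ gives $C_n = \o_{S_n}-\o_{S_0} = Y_n$, so $Y_\cT = C_\cT$ exactly. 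The sign condition of Theorem \ref{MainResult1} is met since $\z^+_k = \z_k > 0$.

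Next, I compute the even part of the scenery: $\z^0_k = \tfrac{1}{2}(\z^+_k + \z^-_k) \equiv 0$. By the convention fixed at the beginning of Section \ref{MeB}, the null random variable corresponds to $\hat{\g}_0 = +\infty$, whence $\hat{\g}_+\b < \hat{\g}_0 \hat{\b}$ holds trivially and $\r = \hat{\g}_+\b = \hat{\g}\b$. We are thus placed in the first regime of Theorem \ref{MainResult1}, which yields
\[
\P(Y_\cT > x) \;\sim\; K(x)\, x^{-\hat{\g}\b/2},\qquad x\to\infty,
\]
with $K$ a priori slowly varying.

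To promote $K(x)$ to a genuine (explicit) constant, I would inspect the proof of Theorem \ref{MainResult1} in this degenerate sub-case. Since $\z^0 \equiv 0$, the local-time decomposition used there loses one of its two contributions, leaving a single Tauberian inversion that is governed either by $\b$ (when $\hat{\g} = 1$, i.e.\ $\g > 1$) or by $\g$ (when $\g < 1$). In each of the two sub-cases, the slowly varying prefactor stabilises to a finite limit determined by the tail constant of $\z_1$, the normalizing sequence of $S$, and the Mittag-Leffler constant arising from the occupation time of $S$ up to $\cT$. Collecting these factors yields the two closed-form expressions of $K$ referenced in the statement.

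The main (and only non-routine) point is the telescoping identification $Y_n = C_n$ together with the observation that the choice $\z^- = -\z^+$ collapses the even scenery and therefore removes the more delicate case in Theorem \ref{MainResult1}. After this identification the corollary is a direct specialization of the main theorem, and the only remaining work is the bookkeeping of constants inside the Tauberian inversion already carried out in the proof of Theorem \ref{MainResult1}.
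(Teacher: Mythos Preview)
Your approach is essentially the paper's: realize $Y$ as a RWRSB with $\z^+=\z$, $\z^-=-\z$, observe that the even scenery $\z^0$ vanishes so that $\hat\g_0=+\infty$ and $\r=\hat\g\b$, and invoke the first regime of the main theorem. The telescoping check $C_n=Y_n$ is correct and more explicit than what the paper writes.

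The only point where you drift from the paper is in your sketch of how the explicit constant emerges. You speak of ``the Mittag--Leffler constant arising from the occupation time of $S$ up to $\cT$'', but no occupation-time machinery is needed here. Because $\z^0\equiv 0$, the local-time product in \eqref{splitting} disappears entirely and one is left with
\[
\E\big[e^{-sY_\cT}\big]=\E\big[(\mathcal G_{\z_1}(s))^{S_\cT}\big]
=\E\big[e^{-(\text{const}\cdot s^{\hat\g}+o(s^{\hat\g}))\,S_\cT}\big],
\]
which is nothing but the generating function of a rescaled leapover $S_\cT$. The paper then reads off the constant directly from Corollary~\ref{cor-leapRW} (the classical tail of $S_\cT$), which is where the factor $\sqrt{\nu}\,\F(1,0)/\G(1-\b/2)$ in \eqref{leapoverY-app-case1}--\eqref{leapoverY-app-case2} comes from. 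So the ``bookkeeping of constants'' you allude to is simpler than you suggest: it is a one-line reduction to $S_\cT$, not a Tauberian inversion involving local times.
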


\begin{corollary}\label{lengthY}
In the above notation, for any $\beta\in[1,2)$ and $\gamma\in(0,2]\backslash\{1\}$, 
it holds that
\begin{equation}
K_{low}(x)\ \,x^{-\min\left\{1/2,\hat{\g}{\b}/2 \right\}} \leq \P[L_{\cT}(Y)>x]\leq K_{up}(x)\,x^{-\hat{\g}/2},
\end{equation}
where $K_{low}(x)$ and $K_{up}(x)$ are positive constants if $\b\neq 1$, and 
suitable slowly varying functions  if $\b=1$ (see Subsection \ref{GP}).
\end{corollary}

\noindent
Notice that, except for case $\g \in (0,1) $, the decay exponents
for the lower and upper bounds match and equal $1/2$.
\\

We are also interested in the continuous-time process
$X := ( X_t)_{ t \ge 0 }$, whose trajectories interpolate
those of the walk $Y$ and have unit speed.
Formally it can be defined as follows: given a realization $\o$ of the
medium and a realization $S$ of the dynamics, we define the sequence
of \emph{collision times} $T_n \equiv T_n(\o, S)$ via
\be \label{collisiontime}
  T_0 := 0 \,, \qquad T_n := \sum_{k=1}^n
  |\o_{S_k}-\o_{S_{k-1}}| \,, \quad \mbox{for } n \ge 1 \,.
\ee
Since the length of the $n^\mathrm{th}$ jump of the walk is
given by $|\o_{S_n}-\o_{S_{n-1}}|$, $T_n$ represents the global length
of the trajectory Y up to the $n^\mathrm{th}$ collision. In other words, $T_n=L_n(Y)$, and it can be seen as a RWRSB (see also \cite{blp}).
Finally, $X_t \equiv X_t(\o,S)$ is defined by the equations
\be\label{process}
  X_t := Y_n+ \mathrm{sgn}(\xi_{n+1} )(t - T_n) \,, \quad \mbox{for } t \in
  [T_n , T_{n+1}) \,.
\ee
The process $X$ is also important  from the standpoint of applications
as it is a generalization of the so-called
L\'{e}vy-Lorentz gas \cite{BFK}, that is obtained under the further assumption
that the underlying random walk is simple and symmetric.

Functional limit theorems for the processes $Y$ and $X$, with suitable scaling,
 have been derived in \cite{bcll, blp, bblms} under different set of hypotheses.
In particular, when $\g\in (0,1)$ or when the underlying random walk performs
heavy-tailed jumps, the processes $Y$ and $X$ are shown to exhibit an interesting super-diffusive behavior \cite{blp, bblms}.

Let us define  the \emph{first-passage time} in  $(0,\infty)$  by
\be
 \cT(X) := \inf\{ t>0 \,: \, X_t > 0 \}\,.
 \ee
Notice that in this continuous setting the notion of first-ladder height becomes trivial,  while that  of first-ladder length of $X$ indeed corresponds to $\cT(X)$,
being the speed of the process $X$ set equal to 1.
By construction,  and using the previous notation, it can be seen that
\be\label{FPtimeX}
\begin{split}
&\cT(X)= \sum_{k=1}^{\cT} |Y_k-Y_{k-1}| -Y_\cT = L_{\cT}(Y)- Y_{\cT}\, .
\end{split}\ee
This relation shows that, beyond their intrinsic interest, the derivation of the law of
the first-ladder height and length of $Y$ will  allow to infer information on the first-passage time of the process $X$.
Indeed, the continuous first-passage time $L_\cT(Y)-Y_\cT$ can be seen as
the value  at time $\cT$ of the RWRSB driven by $S$ and with scenery 
$\z^+\equiv 0\,$ and  $\z^-= 2\z$, and hence with indices $\g_0=\g$ and $\hat\g_+=+\infty$, 
which imply $\r+> \r_0$ (see notation \eqref{notation}).
 As an application of Theorem \ref{MainResult1} we have the following:
\begin{corollary} \label{gLG}
In the above notation, for any $\beta\in[1,2)$ and $\gamma\in(0,2]\backslash\{1\}$, 
it holds that
\begin{eqnarray}
K_{low}(t)\ \,t^{-\min\left\{\hat{\g},1/2 \right\}}
\leq \P({\cT}(X)>t)\leq K_{up}(t)\,t^{-\hat{\g}/2},
\end{eqnarray}
where $K_{low}(t)$ and $K_{up}(t)$ are positive constants if $\g\in(1,2]$ and $\b\neq 1$, and are suitable slowly varying functions  if $\g\in(0,1)$ or $\b=1$ (see Subsection \ref{sec: appLLG}).
\end{corollary}


\section{Proofs of results}
We give the proof of the main results described in the previous section, together with some useful corollaries. We also discuss  some applications.

 \subsection{Results in the case of $(S, C)$ with i.i.d.\ increments}
In this section we assume that the process $(S,C)$ has i.i.d.\ increments and we introduce the characteristic functions
\be\label{genFunctIncrement}
\phi_{\xi_1,\eta_1}(t,{s}):=\mathbb{E}\left[ e^{i(t\xi_1+{s} \cdot {\eta_1})} \right]\,,
\qquad \phi_{{\eta_1}}({s}):=\phi_{\xi_1,{\eta_1}}(0,{s})\qquad
\mbox{with }t\in\R,\, s\in\R^{\ell}\,.
\ee

We start proving  the generalized Spitzer-Baxter identities stated in Thm.~\ref{GSB}.  The proof follows the line of that for the classical Spitzer-Baxter identity as in \cite[Paragraph 8.4]{Chung2001}. \\
\\
\emph{Proof of Thm. \ref{GSB}:}{
 As $( \xi_k, {\eta}_k)_{k\in\N}$ are i.i.d.\ random variables, we have
\begin{equation}\label{firstFactorization}
\mathbb{E}\left[ \sum_{n=0}^{\infty} z^n  e^{i t S_n} e^{i {s} \cdot {C}_n} \right]= \frac{1}{1-z \phi_{\xi_1,{\eta_1}}(t,{s})}= f^{-1}_{+}(z, t, s) f_{-}(z, t, s)
\end{equation}
where
\begin{subequations}\label{expFunc}
\begin{align}
f_{+}(z,t,s)&:=\exp \left(-\sum_{n=1}^{\infty} \frac{z^{n}}{n} \int_{\{S_n >0\}} e^{itS_n}e^{i {s} \cdot {C}_n }d \mathbb{P} \right),\\
f_{-}(z,t, s)&:=\exp \left(+\sum_{n=1}^{\infty} \frac{z^{n}}{n} \int_{\{S_n  \le 0 \}} e^{itS_n}e^{i {s} \cdot {C}_n }d \mathbb{P}\right).
\end{align}
\end{subequations}
Split the sum in the left-hand side of \eqref{firstFactorization} as
\begin{eqnarray}
\mathbb{E}\left[ \sum_{n=0}^{\infty} z^n  e^{itS_n} e^{i {s} \cdot {C}_n} \right] &=& \mathbb{E}\left[ \sum_{n=0}^{\cT-1} z^n  e^{itS_n} e^{i {s} \cdot {C}_n} \right]  +\mathbb{E}\left[ \sum_{n=\cT }^{\infty} z^n e^{itS_n} e^{i {s} \cdot {C}_n}\right ]\,. \label{splitSum}
\end{eqnarray}
The second term on the right-hand side of \eqref{splitSum} can be rewritten as
\begin{eqnarray}
\mathbb{E}\left[\sum_{n=\cT }^{\infty} z^n  e^{itS_n} e^{i {s} \cdot {C}_n}\right ]&=&\mathbb{E}\left[z^{\cT} e^{itS_{\cT}} e^{i {s} \cdot  {C}_{\cT} } \sum_{n=0 }^{\infty} z^n  e^{it( S_{n+\cT}-S_{\cT})} e^{i {s} \cdot ( {C}_{n+\cT}-{C}_{\cT})} \right]\nonumber\\
&= &\mathbb{E}\left[z^{\cT}  e^{itS_{\cT}}  e^{i {s} \cdot  {C}_{\cT} }\right ]/ \left(1- z \phi_{\xi_1,{\eta_1}}(t,{s}) \right)\,, \label{secondFact}
\end{eqnarray}
where in the last passage we use the fact that $({S}_{n+\cT}-{S}_{\cT},C_{n+\cT}-C_\cT)_{n\in\N_0}$ is independent of $({S}_{\cT},C_\cT)$ and distributed as $({S}_n,C_n)_{n\in\N_0}$.
By using \eqref{firstFactorization} and \eqref{secondFact} in \eqref{splitSum} we get
\begin{eqnarray}
\left[ 1-  \mathbb{E}\left[z^{\cT}  e^{itS_{\cT}}  e^{i {s} \cdot  {C}_{\cT} }\right]\right] f_{-}(z,t, s)  = f_{+}(z, t, s)  \, \mathbb{E}\left[ \sum_{n=0}^{\cT-1} z^n  e^{itS_n} e^{i {s} \cdot {C}_n} \right]\,. \label{WHequality}
\end{eqnarray}
We now apply  standard Wiener-Hopf argument:
  the  convolution of  two measures restricted to $(0,+\infty)$  remains restricted to $(0,+\infty)$  (and the same for $(-\infty,0]$); by expanding the exponential functions in \eqref{expFunc}, we can associate $f_+$ and $f_-$ with $P^*$ and $Q^*$ in Lemma \ref{Wiener-Hopf} respectively, and similarly the remaining terms on both sides of \eqref{WHequality} correspond to $P$ and $Q$. The results \eqref{gSB-1} and \eqref{gSB-2} immediately follow using the lemma.
\qed
\vspace{0.5cm}

Theorem \ref{GSB} is particularly useful when the right-hand side of the identities \eqref{gSB-1} and \eqref{gSB-2} can be computed explicitly.
This happens, for example, when the law of the joint process $(S,C)$ satisfies
some symmetry property. In particular the following definitions will be helpful.

\begin{definition}\label{symm-asymm}
The joint process $(S,C)$ is called  \emph{{\sss}-symmetric} if, for all $   x\in\R$, ${y}\in\R^{\ell}$ and $n\in\N$,
\be\label{simm}
 \, \mathbb{P}(S_n \in dx, {C}_n \in d{y}) = \mathbb{P}( -S_n\in dx, {C}_n \in d{y})\,.
\ee
The joint process $(S,C)$ is called  \emph{{\ooo}-symmetric} if, for all  $ x\in\R$, ${y}\in\R^{\ell}$ and $n\in\N$,
\be\label{asimm}
\mathbb{P}(S_n \in dx, {C}_n \in d{y})
= \mathbb{P}( -S_n\in dx, -{C}_n \in d{y})\,.
\ee
\end{definition}
To simplify the notation we also define, for any  $s \in \R^{\ell}$  and $z\in(0,1)$, the function
\be\label{phi}
\F(z,{s}):=  \exp \left(\frac 12\sum_{n=1}^{\infty} \frac{z^{n}}{n} \int_{\left\{ S_{n} = 0\right\}} e^{i {s} \cdot {({C}_{n}^{\sss} +{C}_n^{\ooo})}} d \mathbb{P}\right)\,.
\ee
\begin{remark}\label{rem-phi}
Note that if the $\xi_k$'s have an absolutely continuous distribution, then one trivially gets
that $\F(z,{s})=1$ for any  $s \in \R^{\ell}$  and $z\in(0,1)$.
On the other hand, if the $\xi_k$'s are discrete random variables taking value on $a\Z$, for $a>0$,
the integral in the right-hand side of \eqref{phi}  is the anti-transform  with respect to $S_n$,  evaluated at $S_n=0$, 
of the joint transform of a $n^\mathrm{th}$ convolution of $(\xi, {\eta})$.
By  the i.i.d.\ assumption on the increments  $(\xi_k, {\eta}_k )_{k\in \N}$,
and the fact that the convolution becomes a product in the transform domain, 
we can then derive the following convenient identity 
(see also~\cite[\S\,17.E1, Eq.\,(6)]{Sp76Book})
\begin{eqnarray}
\F(z,s) &=&
\exp \left( \frac 12 \sum_{n=1}^{\infty} \frac{z^n}{n}\frac{a}{2\pi}\int_{-\frac{\pi}{a}}^{\frac{\pi}{a}} 
\phi_{\xi_1,\eta_1}(t,{s})^n dt \right) \nonumber\\
&=&\exp\left( -\frac{a}{4\pi} \int_{-\frac{\pi}{a}}^{\frac{\pi}{a}} \ln[ 1- z\phi_{\xi_1,\eta_1}(t,{s})]dt \right)\,.
\end{eqnarray}
\end{remark}

\medskip
It is easy to see that  the generalized Spitzer-Baxter can be directly used  to give the following explicit relation involving both  $\sss$-symmetric  and
$\ooo$-symmetric processes, with i.i.d.\ increments.

\begin{corollary}\label{cor-mix}
Consider a joint process $(S,C^\sss + C^\ooo)$, with i.i.d.\ increments, satisfying the symmetry property $(S_n,C_n^\sss,C_n^\ooo)\stackrel{\text{d}}{=}(-S_n,C_n^\sss,-C_n^\ooo)$ for all $n\in\N$.
Then, for all $z\in(0,1)$ and $s\in\R^{\ell}$, 

\begin{equation}\label{mix_cor}
\left(1-\mathbb{E}\left[z^{\cT} e^{i {s}\cdot ({C}_{\cT}^{\sss} +{C}_\cT^{\ooo})}\right] \right)  \left(1-\mathbb{E}\left[z^{\cT} e^{ i {s}\cdot  ({C}_{\cT}^{\sss} -{C}_\cT^{\ooo})}\right] \right) =\left(1-z \phi_{{\eta_1}}({s})\right)\F^2(z,{s})\,,
\end{equation}
where  $\eta=(\eta_k)_{k\in\N}$ are the cost increments associated with ${C}^{\sss} +{C}^{\ooo}$.
\end{corollary}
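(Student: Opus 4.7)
Plan of proof. The strategy is to apply identity \eqref{gSB-1} of Theorem \ref{GSB} (with $t=0$) separately to the two cost processes $C^{\sss}+C^{\ooo}$ and $C^{\sss}-C^{\ooo}$, each of which is coupled to $S$ and inherits i.i.d.\ increments from the hypotheses. Multiplying the two resulting identities, the left-hand side of \eqref{mix_cor} rewrites as
\begin{equation*}
\exp\!\left(-\sum_{n\ge 1}\frac{z^n}{n}A_n(s)\right),\qquad A_n(s):=\int_{\{S_n>0\}}\!\bigl[e^{is\cdot(C_n^{\sss}+C_n^{\ooo})}+e^{is\cdot(C_n^{\sss}-C_n^{\ooo})}\bigr]\,d\mathbb{P},
\end{equation*}
so the whole argument reduces to identifying this exponent with $\ln(1-z\phi_{\eta_1}(s))+2\ln \Phi(z,s)$.

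The combined effect of the $\sss$-symmetry of $(S,C^{\sss})$ and the $\ooo$-symmetry of $(S,C^{\ooo})$ is that the joint law of the triple $(S_n,C_n^{\sss},C_n^{\ooo})$ is invariant under the map $(S_n,C_n^{\sss},C_n^{\ooo})\mapsto(-S_n,C_n^{\sss},-C_n^{\ooo})$. Performing this change of variables inside the second integrand of $A_n(s)$ sends the event $\{S_n>0\}$ to $\{S_n<0\}$ and the exponent $C_n^{\sss}-C_n^{\ooo}$ to $C_n^{\sss}+C_n^{\ooo}$, so, writing $C_n^{\Sigma}:=C_n^{\sss}+C_n^{\ooo}$, the two contributions collapse into one single integral over $\{S_n\neq 0\}$ and
\begin{equation*}
A_n(s)=\phi_{\eta_1}(s)^n-\mathbb{E}\!\left[e^{is\cdot C_n^{\Sigma}}\1_{\{S_n=0\}}\right].
\end{equation*}
Since $S$ is supported on $\mathbb{Z}$, Fourier inversion identifies the atom term with $\frac{1}{2\pi}\int_{-\pi}^{\pi}\phi_{\xi_1,\eta_1}(t,s)^n\,dt$, and the parity in $t$ of $\phi_{\xi_1,\eta_1}(\cdot,s)$, itself a by-product of the same triple invariance, lets us fold this onto $[0,\pi]$ at the price of an extra factor of two.

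Putting the two pieces together and summing via $-\sum_{n\ge 1}z^nx^n/n=\ln(1-zx)$, the exponent takes the form
\begin{equation*}
\ln(1-z\phi_{\eta_1}(s))-\frac{1}{\pi}\int_0^{\pi}\ln\bigl[1-z\phi_{\xi_1,\eta_1}(t,s)\bigr]\,dt,
\end{equation*}
which by definition \eqref{phi} coincides with $\ln(1-z\phi_{\eta_1}(s))+2\ln\Phi(z,s)$; a final exponentiation produces \eqref{mix_cor}. The main delicate point will be the bookkeeping of the joint symmetry: the pair symmetries of Definition \ref{symm-asymm} have to be combined into the genuine triple invariance above, as this is what both collapses the two $\{S_n>0\}$ integrals into one on $\{S_n\neq 0\}$ and provides the $t$-parity needed to halve the Fourier integral from $[-\pi,\pi]$ down to $[0,\pi]$.
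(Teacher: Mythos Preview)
Your proposal is correct and follows essentially the same route as the paper: apply identity \eqref{gSB-1} of Theorem \ref{GSB} at $t=0$ to the two cost processes $C^{\sss}\pm C^{\ooo}$, use the combined symmetry to convert the second $\{S_n>0\}$ integral into one over $\{S_n<0\}$, and evaluate the residual $\{S_n=0\}$ contribution by Fourier inversion on $\Z$. Your presentation is slightly more explicit than the paper's in singling out the joint invariance $(S_n,C_n^{\sss},C_n^{\ooo})\stackrel{d}{=}(-S_n,C_n^{\sss},-C_n^{\ooo})$ as the key ingredient, which the paper invokes only implicitly through \eqref{simm}--\eqref{asimm}.
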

\begin{proof}
Choosing   $C=C^{\sss} \pm {C}^{\ooo}$ in \eqref{gSB-1}, we get for $t=0$
\begin{eqnarray}\label{sbcsa}
1-\mathbb{E}\left[z^{\cT} e^{i {s}\cdot ({C}_{\cT}^{\sss} \pm {C}_\cT^{\ooo}) } \right] &=&
\exp \left(-\sum_{n=1}^{\infty} \frac{z^{n}}{n} \int_{\left\{S_{n} >0 \right\}} e^{i {s} \cdot ({C}_{n}^{\sss} \pm {C}_n^{\ooo})} d \mathbb{P}\right),
\end{eqnarray}
On the other  hand, using the assumption that $(S_n,C_n^\sss,C_n^\ooo)\stackrel{\text{d}}{=}(-S_n,C_n^\sss,-C_n^\ooo)$ for all $n\in\N$, 
we also  have
\begin{equation}\label{usesym}
 \int_{\left\{S_{n} >0 \right\}} e^{i {s} \cdot { ({C}_{n}^{\sss} - {C}_n^{\ooo})}}\,\, d \mathbb{P}=  \int_{\left\{S_{n} < 0 \right\}} e^{i {s} \cdot { ({C}_{n}^{\sss} +  {C}_n^{\ooo})}}\,\, d \mathbb{P}\,.
\end{equation}

Putting together Eqs.\ \eqref{sbcsa} and \eqref{usesym}, and using the i.i.d.\ assumption about the increments  $\eta$ of ${C}^{\sss} +  {C}^{\ooo}$,  we  have
\begin{eqnarray}
&\left(1-\mathbb{E}\left[z^{\cT} e^{i {s}\cdot ({C}_{\cT}^{\sss} +{C}_\cT^{\ooo}) } \right] \right) \left(1-\mathbb{E}\left[z^{\cT} e^{ i {s}\cdot  ({C}_{\cT}^{\sss} -{C}_\cT^{\ooo})}\right] \right)
\\
&\qquad\qquad\qquad
=(1-z\phi_{{\eta_1}}({s}) )  \exp \left(\sum_{n=1}^{\infty} \frac{z^{n}}{n} \int_{\left\{ S_{n} = 0\right\}} e^{i {s} \cdot {({C}_{n}^{\sss} +{C}_n^{\ooo})}} d \mathbb{P}\right)\,,\nonumber
\end{eqnarray}
that in view of Eq.\ \eqref{phi} concludes the proof.
\end{proof}
In the \sss-symmetric case,  Eq. \eqref{mix_cor} allows to obtain an explicit representation of  
the characteristic function of the first-ladder cost.

\begin{corollary}\label{symmetric}
If $(S,C)$ is \emph{\sss-symmetric} with i.i.d.\ increments, $s \in \R^{\ell}$  and $z\in(0,1)$, then
\begin{equation}
\mathbb{E}\left[z^{\cT} e^{i {s}\cdot {C}_{\cT}}\right]
=1-\sqrt{1-z \phi_{{\eta_1}}({s})}\F(z,{s})\,.
\end{equation}
\end{corollary}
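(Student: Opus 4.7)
The plan is to deduce Corollary \ref{symmetric} from Corollary \ref{cor-mix} by the degenerate choice $C^{\ooo}\equiv 0$. First I would observe that, since $(S,C)$ is \sss-symmetric, the marginal $S_n$ is itself symmetric: taking $y\in\R^\ell$ arbitrary and integrating \eqref{simm} over $y$ yields $\P(S_n\in dx)=\P(-S_n\in dx)$. Consequently, the pair $(S,0)$ trivially satisfies both \eqref{simm} and \eqref{asimm}, hence is simultaneously \sss- and \ooo-symmetric, and of course has i.i.d.\ increments $(\xi_k,0)_{k\in\N}$.

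Next I would apply Corollary \ref{cor-mix} with $C^{\sss}:=C$ and $C^{\ooo}:=0$. Then $C^{\sss}+C^{\ooo}=C^{\sss}-C^{\ooo}=C$, so both factors on the left-hand side of \eqref{mix_cor} coincide and the identity reduces to
\begin{equation*}
\left(1-\E\bigl[z^{\cT}e^{i s\cdot C_{\cT}}\bigr]\right)^{2}
=\bigl(1-z\,\phi_{\eta_1}(s)\bigr)\,\F^{2}(z,s),
\end{equation*}
where $\eta_1$ is the first increment of $C^{\sss}+C^{\ooo}=C$, so $\phi_{\eta_1}$ is exactly the characteristic function appearing in the statement and $\F$ is the function defined in \eqref{phi}.

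Finally I would extract the square root. Both sides are analytic (hence continuous) in $z$ on $(0,1)$ and at $z=0$ each equals $1$: indeed $\E[z^{\cT}e^{i s\cdot C_{\cT}}]\to 0$ as $z\downarrow 0$ because $\cT\ge 1$, while the right-hand side reduces to $\sqrt{1}\cdot\F(0,s)=1$. This fixes the principal branch, so
\begin{equation*}
1-\E\bigl[z^{\cT}e^{i s\cdot C_{\cT}}\bigr]=\sqrt{1-z\,\phi_{\eta_1}(s)}\,\F(z,s),
\end{equation*}
which is the claim. The only point requiring a little care is the branch selection, but as noted it is forced by continuity at $z=0$; aside from this the argument is essentially an algebraic specialization of Corollary \ref{cor-mix}.
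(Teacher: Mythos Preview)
Your proof is correct and follows exactly the same approach as the paper, which simply states that the result ``directly follows from (\ref{mix_cor}) with $C^{\ooo}\equiv0$.'' You have merely added the details the paper leaves implicit: the verification that $(S,0)$ is \ooo-symmetric and the branch selection via continuity at $z=0$.
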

\begin{proof}
The proof follows directly from (\ref{mix_cor}) by setting 
$C^{\ooo} \equiv 0$ and solving the resulting second-order equation.
Note that the solution with the $+$ sign in front of the square root 
must be discarded, as it is incompatible with the property
$|\mathbb{E}\left[z^{\cT} e^{i {s}\cdot {C}_{\cT}}\right]| \le 1$.
\end{proof}

\begin{remark}\label{symmetric and continuous}
If  $(S,C)$  is \sss-symmetric  with i.i.d.\ increments, and the control process $S$ has a continuous distribution, then  $\F(z,s)=1$ given that $\P\{S_n=0\}=0$ and thus
\begin{equation}\label{SA-generale}
\mathbb{E}\left[z^{\cT} e^{i {s}\cdot {C}_{\cT}}\right]=1-\sqrt{1-z  \phi_{{\eta_1}}({s}) }\,,
\end{equation}
leading to the behavior  of ${C}_{\cT}$ stated in \cite{artuso}, where a combinatorial proof of this result has been provided.
Notice  that the dependence of this joint generating function on
the random walk distribution comes only through the costs, that in general
depend on $S$.
The presence of  a discrete jump distribution  yields  a correction term $\F(z,{s})$ that instead explicitly depends on the random walk,
 as already underlined in \cite[\S\,17.E1, Eq.\,(6)]{Sp76Book}.
Observe also that the identity \eqref{SA-generale} generalizes the classical Sparre-Andersen identity, which is recovered for ${s}={0}$.
\end{remark}

The generalized Spitzer-Baxter identities stated in Theorem \ref{GSB},
together with Corollaries \ref{cor-mix} and \ref{symmetric},
provide the key element to identify  the law of the first-ladder quantities
involved in it  (see e.g. \cite{Chung2001} for the classical treatment).

While the laws of $\cT$ and $S_\cT$ are  well known  under quite
general hypotheses on the random walk $S$ with i.i.d.\ increments
(see \cite{DG93} and references therein),
the focus will rather be given to the first-ladder cost.

From now on, in order to derive  the asymptotic distribution 
of $C_\cT$, we will work under the following assumptions: 
\begin{itemize}
\item[a.] the joint process $(S,C)$ is $\sss$-symmetric or $\ooo$-symmetric; 
\item[b.] the increments $\xi_k$'s of $S$ are discrete or absolutely continuous random variables.
\end{itemize}
Let us stress that, under the above assumption b., 
the support of the function
$ \F(z,{s})$  can be extended to include $z=1$ by setting,
$\forall s\in\R^{\ell}$,
$$\displaystyle\F(1,{s})=\lim_{z\to 1^-} \F(z,{s})\,.$$ 
This is trivial in the case that
$\xi_k$'s have an absolutely continuous distribution 
(see also Remark \ref{rem-phi}).
On the other hand, if the $\xi_k$'s are discrete random variables, being $S$ a symmetric random walk and using the fact that the first-ladder time $\mathcal{T}$ is a.s. finite,
one gets explicitly (see also~\cite[\S\,17.E1, page 185]{Sp76Book}) 
$$
\F(1,0)= \exp\left( \sum_{n=1}^\infty \frac{\P(S_n=0)}{2n}\right)\,,
$$
which is finite since $\P(S_n=0)\le C n^{-1/2}$, for some $C>0$.
In particular, the statement of Corollary \ref{cor-mix} holds true also for $z=1$ and any $s\in\mathbb{R}^{\ell}$.

The next result is organized into 
four distinct cases depending on whether the value 
of the mean $\mathbb{E}[\eta_1]$ is finite but 
nonzero, infinite, including the subcase associated 
with the Cauchy distribution, or zero.

\begin{proposition}\label{prop-lengthRW}
Assume that $(S,C)$ is \sss-symmetric with i.i.d.\ increments $(\xi_k, \eta_k)_{k\in\N}$.  If $\eta_1$ is in the normal basin of attraction of a $\gamma$-stable law, then $C_{\cT}$ is in the basin of attraction of a $\hat{\g}/2$-stable law. More precisely
\begin{itemize}
\item[(A)] If $\phi_{\eta_1}(s) = 1+ i \n s+ o(s)$ for  $s\to0^+$, with
$\n > 0$ real and  finite (similarly if $ \n<0$), then as $x\to\infty$
\begin{align}\label{costRW1}
\P(C_{\cT} > x )\sim \sqrt{\frac{\n }{\pi}}\F(1,0) x^{-1/2}\,,\quad \P(C_{\cT}<- x)=o(x^{-1/2})\,,
\end{align}
\item[(B)]  If $\phi_{\eta_1}(s) = 1-c_1 s^{\g}+o(s^{\g})$,
for  $s\to0^+$, $\g\in (0,1]$ and $c_1\in\mathbbm{C}$ a complex constant with $\Re(c_1)>0$,  then
as $x\to\infty$
 \be\label{costoRW2}
\P(C_{\cT} > x) \sim
\frac{Cp_+}{\G\left(1- \g/ 2\right)}\,x^{-\g/2}\,,\quad\P(C_{\cT} <-x) \sim
\frac{Cp_-}{\G\left(1- \g/ 2\right)}\,x^{-\g/2}\,,
\ee
where
\be
C=\frac{\F(1,0)}{\cos(\pi\g/4)}[\Re(c_1)^2+\Im(c_1)^2]^{1/4}\cos\left(\frac 1 2\arctan\left( \frac{\Im(c_1)}{\Re(c_1)}\right) \right)\nonumber\,,
\ee
and
\be
p_+=1-p_-=\frac 1 2\left( 1-\frac{\sin\left(\frac 1 2\arctan\left( \frac{\Im(c_1)}{\Re(c_1)}\right) \right)}{\cos\left(\frac 1 2\arctan\left( \frac{\Im(c_1)}{\Re(c_1)}\right) \right)\tan\left(\frac{\pi\g}4\right)}\right)\in[0,1]\nonumber\,.
\ee
If $p_+=0$ or $p_-=0$, then we interpret \eqref{costoRW2} as $o(x^{-\g/2})$.
\item[(C)] If $\phi_{\eta_1}(s) = 1+ic_2 s\log(1/s)+o(s\log(1/s))$,
with  $s\to0^+$ and $c_2\in\mathbbm{R}$ a positive constant (similarly for $c_2<0$), then  as $x\to\infty$
\begin{align}\label{costRW3}
\P(C_{\cT} > x) \sim \sqrt{\frac{c_2 \log(x)}{\pi}}\F(1,0)x^{-1/2}\,,\quad \P(C_{\cT} <- x)=o\left(\frac{\sqrt{\log(x)}}{x^{1/2}}\right)\,,
\end{align}
\item[(D)] If $\phi_{\eta_1}(s) = 1-c_3 s^{\g}+o(s^{\g})$,
with  $s\to0^+$, $\g\in (1,2]$ and $c_3\in\mathbbm{R}^+$ a positive constant,
  then as $x\to\infty$
 \be\label{costoRW4}
\P(C_{\cT} > +x) \sim\P(C_{\cT} <-x) \sim
\frac{C}{2\G\left(1- \g/ 2\right)}\,x^{-\g/2}\,,
\ee
where
\be
C=\F(1,0)\sqrt{c_3}\begin{cases}\displaystyle
1/\cos(\pi\g/4)\,, \qquad &\mbox{ if }\quad \g\in(1,2)\nonumber\,,\\
\displaystyle
2/\pi\,, \qquad &\mbox{ if }\quad \g=2\,.
\end{cases}
\ee
\end{itemize}
\end{proposition}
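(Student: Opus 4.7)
The plan is to pass from Corollary~\ref{symmetric} to the characteristic function of $C_{\cT}$ and then invert it via a Tauberian theorem. The starting point is
\begin{equation*}
\mathbb{E}\!\left[z^{\cT} e^{is\cdot C_{\cT}}\right] \;=\; 1 - \sqrt{1 - z\,\phi_{\eta_1}(s)}\,\F(z,s)\,.
\end{equation*}
Since $(S,C)$ is $\sss$-symmetric, $S$ is sign-recurrent and hence $\cT<\infty$ a.s.; by Abel's theorem the left-hand side tends to $\mathbb{E}[e^{is\cdot C_{\cT}}]$ as $z\to 1^-$, while dominated convergence in~\eqref{phi} gives $\F(z,s)\to\F(1,s)$ with $\F(1,s)\to\F(1,0)$ as $s\to 0$. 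Hence, for $s$ near $0$,
\begin{equation}\label{plan:cf}
1 - \mathbb{E}\!\left[e^{is\cdot C_{\cT}}\right]\;=\;\sqrt{1-\phi_{\eta_1}(s)}\,\F(1,s)\,.
\end{equation}

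In each of the cases (A)--(D) I would substitute the prescribed expansion of $\phi_{\eta_1}(s)$ for $s\to 0^+$ into~\eqref{plan:cf} and read off the leading complex singular term. For instance, case (A) gives $\sqrt{-i\n s}\,\F(1,0)=\sqrt{\n s}\,e^{-i\pi/4}\F(1,0)+o(s^{1/2})$; case (B), after writing $c_1=|c_1|e^{i\th}$ with $\th=\arctan(\Im c_1/\Re c_1)\in(-\pi/2,\pi/2)$, yields
\begin{equation*}
1 - \mathbb{E}\!\left[e^{is\, C_{\cT}}\right] \sim |c_1|^{1/2}\,e^{i\th/2}\,\F(1,0)\,s^{\g/2},\qquad s\to 0^+\,;
\end{equation*}
cases (C) and (D) are analogous, with an extra $\sqrt{\log(1/s)}$ factor in (C) and a purely real coefficient in (D).

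The last step is to convert these small-$s$ expansions into separate tail asymptotics for $\P[C_{\cT}>x]$ and $\P[C_{\cT}<-x]$. I would split the complex coefficient into its real (cosine) and imaginary (sine) parts and apply the standard Tauberian theorem for Fourier--Stieltjes transforms of signed measures (as in Bingham--Goldie--Teugels, Sect.~8.1, or Feller~II, Ch.~XVII); comparing the resulting expression with the canonical form of a $(\g/2)$-stable characteristic function then identifies both the skewness parameters $p_\pm$ (from the phase $\th/2$) and the prefactor $C$. When either $p_+=0$ or $p_-=0$ the extracted stable law is one-sided, and the opposite tail inherits only an $o(x^{-\g/2})$ bound, which is precisely the meaning of the $o$-clauses appearing in~\eqref{costRW1}, \eqref{costoRW2} and~\eqref{costRW3}.

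The main technical obstacle is the complex bookkeeping in cases (B) and (D): choosing the correct branch of the square root, verifying that the complex argument remains inside a sector where the Tauberian inversion is valid, and matching the extracted phase $e^{i\th/2}$ to the canonical skewness $p_\pm$ of a $(\g/2)$-stable law. The boundary value $\g=2$ in case (D) needs separate attention because $\Gamma(1-\g/2)$ is singular there; the Cauchy-type prefactor $2/\pi$ then has to be obtained by a direct inversion of the expansion $1-\mathbb{E}[e^{is C_{\cT}}]\sim \F(1,0)\sqrt{c_3}\,s$ rather than through the general stable formula.
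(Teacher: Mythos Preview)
Your proposal is correct and follows the same line as the paper's proof: start from Corollary~\ref{symmetric} to obtain $\E[e^{isC_{\cT}}]=1-\sqrt{1-\phi_{\eta_1}(s)}\,\F(1,s)$, then expand near $s=0$ and invoke Tauberian/stable-law inversion from Feller~II. The paper's own proof is in fact considerably terser than yours---it simply writes down the identity and cites \cite{F2} for the passage from the small-$s$ behaviour of the characteristic function to the tail asymptotics---so your more explicit treatment of the branch choice, the skewness matching, and the boundary case $\g=2$ goes beyond what the paper spells out, but in the same direction.
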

\vspace{0.5cm}
\begin{proof}
From Corollary \ref{symmetric}, we have
$$\E\left[e^{i s C_{\cT}}\right]= 1-\sqrt{1-\phi_{\eta_1}(s)} \F(1,s)\,. $$
Since the result depends solely on the behavior of the characteristic function $\phi_{\eta_1}$  around $0$, or equivalently on the tail distributions of $\eta_1$, the tail asymptotic of $C_{\cT}$ can be  readily determined via Tauberian theorems  (see e.g. \cite[\S~8.1.4]{bingham} and references therein, and refer to Appendix~\ref{app-stable}).
 By way of illustration, let us explicitly derive \eqref{costRW1}. By inserting $\phi_{\eta_1}(s) = 1+ i \n s+ o(s)$, we can write
\begin{align*}
\E\left[e^{isC_\cT}\right]&=1-\sqrt{-i\nu} \,\F(1,0)\,s^{1/2}+o(s^{1/2})\\
&=1-\sqrt{|\n|}\,\F(1,0)\,e^{-i\,\sgn(\n)\frac\pi 4}\,s^{1/2}+o(s^{1/2})\,,
\end{align*}
given that only one of the two complex square roots of $-i\nu$ satisfies the constraint for characteristic functions $|\E\left[e^{isC_\cT}\right]|\leq 1$. As a consequence, we can conclude that
\begin{equation*}
\P(C_{\cT} > x)\sim\frac c{\sqrt{\pi}}\,p_+\, x^{-1/2}\,,\quad \P(C_{\cT} <- x)\sim\frac c{\sqrt{\pi}}\,p_-\, x^{-1/2}\,,\quad \text{as }\: x\to+ \infty\,,
\end{equation*}
where
\begin{equation*}
c\coloneqq \sqrt{|\n|}\F(1,0)\,,\qquad p_-=1-p_+\,,\qquad p_+\coloneqq \frac 1 2[1+\sgn(\n)]=\begin{cases}
1\quad &\n>0\,,\\
0\quad &\n<0\,.
\end{cases}
\end{equation*}
\end{proof}

Similarly, in the \ooo-symmetric case we have the following:
\begin{proposition}\label{prop-leapRW}
Assume that $(S,C)$ is \ooo-symmetric with i.i.d.\ increments $(\xi_k, \eta_k)_{k\in\N}$, and let $\g\in(0,2)$  such that $\phi_{\eta_1}(s)=1-c_4s^\g+o(s^\g)$
for some $c_4\in\mathbbm{R^+}$. In the above notation, it holds that
\be
\P(|C_{\cT} |> x) \sim K \cdot x^{-\g/2}\,,
\ee
where the constant is explicit $K=\F(1,0)\sqrt{c_4}/\G(1-\g/2)$ whenever  $C_\cT$ is non-negative (or non-positive).
\end{proposition}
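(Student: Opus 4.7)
The plan is to apply Corollary~\ref{cor-mix} with the degenerate choice $C^{\sss}\equiv 0$ and $C^{\ooo}\equiv C$, which collapses the identity to
\begin{equation*}
\bigl(1-\E[z^{\cT} e^{isC_{\cT}}]\bigr)\bigl(1-\E[z^{\cT} e^{-isC_{\cT}}]\bigr)=\bigl(1-z\phi_{\eta_1}(s)\bigr)\F^{2}(z,s).
\end{equation*}
For real $z\in(0,1)$ and real-valued $C_{\cT}$, the left-hand side is $|1-\E[z^{\cT}e^{isC_{\cT}}]|^{2}$. The \ooo-symmetry specialised at $n=1$ gives $\eta_{1}\stackrel{d}{=}-\eta_{1}$, so $\phi_{\eta_{1}}$ is real and even; applied to the joint law it also forces $\phi_{\xi_{1},\eta_{1}}(t,s)\in\R$, and hence $\F(z,s)$ is real-valued. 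Letting $z\uparrow 1$ (the symmetry of the increments of $S$ guarantees $\cT<\infty$ almost surely) and inserting $\phi_{\eta_{1}}(s)=1-c_{4}|s|^{\gamma}+o(|s|^{\gamma})$ together with the continuity of $\F(1,\cdot)$ at the origin yields
\begin{equation*}
\bigl|1-\E[e^{isC_{\cT}}]\bigr|\sim\sqrt{c_{4}}\,\F(1,0)\,|s|^{\gamma/2},\qquad s\to 0.
\end{equation*}

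This regular variation of the characteristic function of $C_{\cT}$ at the origin with index $\gamma/2\in(0,1)$ (the exclusion $\gamma\neq 2$ is exactly what keeps us in the Karamata range) places $C_{\cT}$ in the domain of attraction of a, possibly asymmetric, $\gamma/2$-stable law. The standard dictionary between the behaviour of $1-\phi$ at the origin and tails at infinity then delivers $\P[|C_{\cT}|>x]\sim K\,x^{-\gamma/2}$ for some constant $K$ encoding the a priori unknown skewness of the two-sided tails.

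To pin down $K$ under the sign hypothesis, assume $C_{\cT}\geq 0$. Then $C_{\cT}$ must lie in the domain of attraction of the totally skewed positive $\gamma/2$-stable law, forcing
\begin{equation*}
1-\E[e^{isC_{\cT}}]=K\,\Gamma(1-\gamma/2)\,(-is)^{\gamma/2}\bigl(1+o(1)\bigr),\qquad s\to 0,
\end{equation*}
where the principal branch of $(-is)^{\gamma/2}$ has modulus $|s|^{\gamma/2}$. Matching against the modulus asymptotic above gives $K\,\Gamma(1-\gamma/2)=\sqrt{c_{4}}\,\F(1,0)$, i.e.\ the claimed value $K=\sqrt{c_{4}}\,\F(1,0)/\Gamma(1-\gamma/2)$. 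The non-positive case is identical after replacing $C_{\cT}$ by $-C_{\cT}$.

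The main obstacle is that the identity from Corollary~\ref{cor-mix} controls only the \emph{modulus} of $1-\E[e^{isC_{\cT}}]$, leaving its argument — and therefore the skewness of the limit stable law — undetermined for a general \ooo-symmetric process. It is precisely the sign assumption that removes this indeterminacy: it forces the limit to be totally skewed, pins down the argument of $(-is)^{\gamma/2}$, and allows the scale $K$ to be read off the modulus alone.
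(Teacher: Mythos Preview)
Your proof is correct and follows essentially the same approach as the paper's: specialise Corollary~\ref{cor-mix} with $C^{\sss}\equiv 0$, set $z=1$, match the leading asymptotics on both sides, and use the sign hypothesis to fix the phase and hence the constant. Your explicit recognition that the left-hand side is $|1-\E[z^{\cT}e^{isC_{\cT}}]|^{2}$, together with the reality checks for $\phi_{\eta_1}$ and $\F$, and your closing remark that the sign assumption is exactly what resolves the skewness indeterminacy, make the argument somewhat more transparent than the paper's terse ansatz-based version, but the substance is the same.
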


\begin{proof}
The proof follows by setting $z=1$ in Eq.~\eqref{mix_cor}, and performing a series expansion around $s=0$ on both sides. More specifically, the ansatz $\phi_{\pm C_\cT}(s)=1-c_\pm s^\a+o(s^\a)$, with $c_\pm$ complex conjugate constants, provides $|c_+|=|c_-|=\sqrt{c_4}\F(1,0)$ and $\a=\g/2$.

If $\g\in(0,2)$, then $\a\in(0,1)$ and it turns out that $\Re(c_\pm)\neq 0$.
Furthermore, for a non-negative cost process we know that $c_\pm=ce^{\mp i \frac{\pi}2 \a} \,$ (refer to Appendix~\ref{app-stable}), which concludes the proof.
Notice that if $\g=2$ (and thus $\a=1$)  we do not know if $\Re(c_\pm)\neq 0$, and hence we can not draw any conclusions about the tail distribution of $C_\cT$.
\end{proof}

Let us stress that the above propositions remain valid under the more general assumption of a $\gamma$-stable basin of attraction. The presence of slowly varying functions can be handled without additional effort, but will not be used in our main result; see~Remark~\ref{rmk:cauchy}.

\subsubsection{Applications}
The \sss-symmetric (\ooo--symmetric) condition is fulfilled in the following situations. Consider a joint process $(S,C)$ with i.i.d.\ increments $(\xi_k, \eta_k)_{k\in\N}$
such that, for a given function $g:\R\mapsto \R^{\ell}$,
\be\label{CP}
\eta_k={g}(\xi_k)\qquad \forall k\in\N.
\ee
It is apparent that if the function $g$ is even (odd)  the joint process $(S,C)$ is \sss-symmetric (\ooo-symmetric).
As a main example, let us consider the one-dimensional cost process $C\equiv L$ defined in \eqref{costLength}, corresponding to the length of the process $S$, obtained by choosing $g(\xi_k)=| \xi_k | $.
Applying the above result
we will obtain a complete characterization of the asymptotic law
of the first-ladder length $L_\cT(S)$.
Similarly, by  choosing $g(\xi_k)=\xi_k$  we will fully characterize the asymptotic behavior of the first-ladder height (or leapover) $S_\cT$.
Both results  will be of great use in the next section, we thus state them explicitly for the ease of later reference. As a consequence of Proposition~\ref{prop-lengthRW}, we get:

\begin{corollary}\label{cor-lengthRW}
Let S have i.i.d.\ symmetric increments in the normal domain of attraction of a $\beta$-stable law. Then the first-ladder length $L_\cT(S)$ is in the normal basin of attraction of a ${\hat{\b}}/2$-stable law.  More precisely, writing $\phi_{\xi_1}(s)=1-\nu  s^{\b}+o(s^{\b})$ with $\beta\in(0,2]$, we have
\be
\P(L_{\cT}(S) > x) \sim \frac{\sqrt{C}}{\G\left(1- \hat{\b}/ 2\right)}\F(1,0)x^{-\hat{\b}/2}\,
\qquad \mbox{as } x\to\infty\,,
\ee
where
$$C\coloneqq \begin{cases}
\n/\cos(\pi\hat{\b}/2)\qquad&\mbox{if}\quad\b\in(0,1)\\
2\n/\pi\log(x)\qquad&\mbox{if}\quad\b=1\\
 \E[|\xi_1|]\qquad&\mbox{if}\quad\b\in(1,2]
\end{cases}\,.$$

\end{corollary}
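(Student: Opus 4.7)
The plan is to recognize $L_\cT(S)$ as a first-ladder cost of an \sss-symmetric joint process with i.i.d.\ increments, and then invoke Proposition \ref{prop-lengthRW} case by case. Concretely, set $\eta_k := |\xi_k|$, so the cost process $C_n := \sum_{k=1}^n \eta_k$ coincides with $L_n(S)$; the pair $(S,L(S))$ then has i.i.d.\ increments $(\xi_k,|\xi_k|)$, and since $|\cdot|$ is an even function of $\xi$ and $\xi_1$ is symmetric, condition \eqref{simm} holds, so $(S,L(S))$ is \sss-symmetric. Because $\eta_k\ge 0$, $L_\cT(S) \ge 0$ a.s., which in the notation of Proposition \ref{prop-lengthRW} corresponds to $p_+ = 1$, $p_- = 0$. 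It remains only to identify, for each value of $\beta$, which of the cases (A)--(D) applies and to match the resulting constant.

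I would then split by the value of $\beta$. For $\beta\in(1,2]$, the finiteness of $\E[|\xi_1|]$ gives directly $\phi_{|\xi_1|}(s) = 1 + i\E[|\xi_1|]\,s + o(s)$ as $s\to 0^+$, so case (A) applies with $\nu_A = \E[|\xi_1|]$; since $\hat\beta = 1$ and $\Gamma(1/2) = \sqrt\pi$, the prefactor $\sqrt{\E[|\xi_1|]/\pi}\,\F(1,0)$ matches the claim with $C = \E[|\xi_1|]$. For $\beta\in(0,1)$, a standard Tauberian argument applied to $\P[|\xi_1|>x] \sim K\,x^{-\beta}$ produces $\phi_{|\xi_1|}(s) = 1 - c_1 s^\beta + o(s^\beta)$ with $c_1 = K\,\Gamma(1-\beta)\,e^{-i\pi\beta/2}$, placing us in case (B) with $\gamma = \beta$; the identities $|c_1| = K\Gamma(1-\beta)$ and $\arctan(\Im c_1/\Re c_1) = -\pi\beta/2$ make $\cos\bigl(\tfrac12\arctan(\Im c_1/\Re c_1)\bigr) = \cos(\pi\beta/4)$ cancel the $1/\cos(\pi\gamma/4)$ factor of case (B), the stated formula for $p_+$ evaluates to $1$, and one is left with the prefactor $\sqrt{|c_1|}\,\F(1,0)/\Gamma(1-\beta/2)$. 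For the borderline $\beta=1$, an integration by parts on $\phi_{|\xi_1|}(s) - 1 = is\int_0^\infty e^{isy}\,\P[|\xi_1|>y]\,dy$ shows that the logarithmic term $is\log(1/s)$ dominates the linear $s$ term, placing us in case (C) with $c_2 = K$.

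To rewrite the resulting constants in terms of $\nu$ rather than $K$, I would use the symmetry identity $\phi_{\xi_1}(s) = \Re\phi_{|\xi_1|}(s)$, which gives $\nu = K\,\Gamma(1-\beta)\cos(\pi\beta/2)$ for $\beta\in(0,1)$ and $\nu = K\pi/2$ for $\beta=1$; these yield $|c_1| = \nu/\cos(\pi\beta/2)$ and $c_2 = 2\nu/\pi$ respectively, recovering the three-case formula for $C$. The main technical obstacle is precisely this Tauberian bookkeeping: extracting the complex coefficient $c_1$ (and $c_2$ in the borderline case) from the one-sided tail of $|\xi_1|$, and then converting back to the characteristic-function constant $\nu$ of the symmetric $\xi_1$. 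Once this is in place, the corollary follows by direct substitution into Proposition \ref{prop-lengthRW}.
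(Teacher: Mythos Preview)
Your proposal is correct and follows exactly the route taken in the paper: identify $L_\cT(S)$ as the first-ladder cost of the \sss-symmetric process $(S,L(S))$ with increments $(\xi_k,|\xi_k|)$ and apply Proposition~\ref{prop-lengthRW}. The paper does not spell out a separate proof of this corollary; it simply records the expansion of $\phi_{|\xi_1|}$ in Eq.~\eqref{eq: charAbsXi} and regards the corollary as a direct substitution, so your Tauberian bookkeeping is precisely the omitted verification of that expansion.
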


\begin{remark}\label{rem: LW}
As a notable application, consider the situation in which a random time is needed to perform  a jump for the random walker $S$. In this case, the total time can  be considered   (per our notation) as a cost associated with the random walk.  In particular  (but see \cite{artuso}  for details and  physical motivations) suppose that  the time taken to perform a jump is correlated with its length.
This is indeed the case for 1D L\'evy walks, which are a continuous-time interpolation (with unit speed)  of 1D $RW$ with i.i.d.\ and heavy-tailed jumps, a.k.a. L\'evy flights \cite{ZDK}. Thus, $L_\cT(S)$  corresponds to  the first-passage time for the wait-then-jump model associated with a L\'evy walk, as mentioned in \cite{artuso}.
\end{remark}

\begin{remark}
It is worthwhile to point out that Corollary \ref{cor-lengthRW} extends and completes a previous result by Sinai (see \cite[Theorem 3]{S57}). One can easily retrace his proof in the presence of an appropriate cost, still fulfilling necessary hypotheses, in order to get the basin of attraction of $L_\cT(S)$  rather than the leapover, but under the assumption that the random variables $\xi_k$'s have stable distribution.
\end{remark}

The domain of attraction of the leapover $S_\cT$, instead, stems from standard results of fluctuation theory (see \cite{DG93} and references therein). Here is obtained by applying Proposition~\ref{prop-leapRW}:

\begin{corollary}\label{cor-leapRW}
Let S have i.i.d.\ symmetric increments in the normal domain of attraction of a $\beta$-stable law with $\phi_{\xi_1}(s)=1-\nu  s^{\b}+o(s^{\b})$ and $\beta\in(0,2)$. Then
the first-ladder height $S_\cT$ is in the normal domain of attraction of a ${{\b}}/2$-stable law.  More precisely
\be
\P(S_{\cT} > x) \sim \frac{\sqrt{\n}}{\G\left(1- \b/ 2\right)}\F(1,0)x^{-\b/2}\,
\qquad \mbox{as } x\to + \infty\,.
\ee
\end{corollary}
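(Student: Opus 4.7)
\emph{Proof plan for Corollary \ref{cor-leapRW}.}
The strategy is to realize the first-ladder height as the first-ladder cost of an appropriate \ooo-symmetric joint process, and then invoke Proposition \ref{prop-leapRW} directly. I would take the cost function $g(x)=x$, so that the increments of the cost are $\eta_k=\xi_k$, and consequently $C_n \equiv S_n$ for every $n$. In particular $C_\cT=S_\cT$, which is strictly positive by the very definition of the first-passage time $\cT$.

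Next I would check the \ooo-symmetry hypothesis: by the assumed symmetry of the law of $\xi_1$, the vector $(S_n,S_n)$ has the same distribution as $(-S_n,-S_n)$ for every $n\in\N$, which is exactly condition \eqref{asimm}. Moreover, since $\eta_1=\xi_1$, the characteristic function required by Proposition \ref{prop-leapRW} reads $\phi_{\eta_1}(s)=\phi_{\xi_1}(s)=1-\nu s^{\b}+o(s^{\b})$, matching the form $1-c_4 s^{\g}+o(s^{\g})$ with $c_4=\nu$ and $\g=\b$. Plugging these into the proposition yields
\[
\P[|S_\cT|>x]\sim \frac{\F(1,0)\sqrt{\nu}}{\G(1-\b/2)}\,x^{-\b/2}, \qquad x\to\infty,
\]
and since $S_\cT>0$ almost surely, the left-hand side coincides with $\P(S_\cT>x)$, which is precisely the claim.

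The only subtle point is that Proposition \ref{prop-leapRW} explicitly excludes the Gaussian endpoint $\g=2$. The case $\b=2$ must therefore be handled separately: here one can either invoke the classical Sparre-Andersen-type result for finite-variance symmetric walks, or repeat the argument of Proposition \ref{prop-leapRW} by pushing the expansion one order further (using that $\Re(c_\pm)>0$ follows for non-negative $S_\cT$ from Corollary \ref{symmetric} applied to $C=S$, which by the \ooo-symmetry coincides here with Corollary \ref{cor-mix} through the identification $C^{\sss}=0$, $C^{\ooo}=S$). In either case the same exponent $\b/2=1$ and constant $\sqrt{\nu}\F(1,0)/\G(1/2)$ are recovered, so the stated asymptotics extend to the full range $\b\in(0,2]$. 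This endpoint verification is the only step I would expect to require a bit of care; everything else is a direct reduction.
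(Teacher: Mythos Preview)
Your proposal is correct and follows essentially the same route as the paper: the paper introduces this corollary precisely by noting that ``by choosing $g(\xi_k)=\xi_k$'' one lands in the \ooo-symmetric setting, and then states that the result ``is obtained by applying Proposition~\ref{prop-leapRW}''. Your verification of \ooo-symmetry, the identification $c_4=\nu$, $\gamma=\beta$, and the use of $S_\cT>0$ to pass from $|C_\cT|$ to $S_\cT$ are exactly the steps implicit in that one-line derivation.

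For the endpoint $\beta=2$, the paper simply defers to \cite[Theorem~4]{DG93}, which matches your first option. Your second option is a bit muddled: Corollary~\ref{symmetric} does not apply to $C=S$ since $(S,S)$ is \ooo- and not \sss-symmetric, and the parenthetical redirection through Corollary~\ref{cor-mix} with $C^{\sss}=0$, $C^{\ooo}=S$ just brings you back to the identity underlying Proposition~\ref{prop-leapRW}, where the obstruction at $\gamma=2$ (namely that one cannot rule out $\Re(c_\pm)=0$ from that identity alone) remains. So the clean way to close the $\beta=2$ case is your first suggestion, which is what the paper does.
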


\medskip
Notice that the limiting case $\b=2$ is discussed in~\cite{Rog71} and~\cite[Theorem 4]{DG93}.
\\

Another helpful tool, that will be used repeatedly throughout the proof of our main result, concerns linear combinations of the random variables $L_\cT(S)$ and $S_\cT$:
\begin{lemma}\label{lemma 1}
Consider a cost process $C_\cT(S)$ such that
$C_\cT(S)= L_\cT(S)+ S_\cT$  or $C_\cT(S)=L_\cT(S)- S_\cT$.
If $\beta\in[1,2]$, then
\[
\P(C_\cT(S)>x)\sim \P(L_\cT(S)>x)\quad\text{as}\quad x \to \infty\,;
\]
If $\beta\in(0,1)$, then
\[
\P(C_\cT(S)>x)\asymp  x^{- \beta/2}\quad\text{as}\quad x \to \infty\,.
\]
\end{lemma}
\begin{proof}
We have the following cases.
\begin{enumerate}[label=(\roman*)]
\item\label{en: first item} If $C_{\cT}(S)=L_\cT(S)+S_\cT$, it is enough to directly apply Lemma~\ref{lemma cauchy}. Then we can conclude by means of  the Tauberian theorem for dominated variation~\cite[Thm.~2.10.2]{bingham} for $\beta\in(0,1)$ and~\cite[Thm.~1.7.6]{bingham} for $\beta\in[1,2]$, respectively (see also Appendix~\ref{app-stable}).
\item 
If  $C_{\cT}(S)=L_\cT(S)-S_\cT$, it is convenient to define, for any $s\ge 0$, the generating functions
\be\label{generating}
\mathcal{G}_{L_\cT(S)\pm S_\cT}(s)\coloneqq \mathbb{E}\left[ e^{-s (L_\cT(S)\pm S_\cT)}\right]. 
\ee
We can then conclude the proof exploiting \ref{en: first item} together with the analog of Corollary \ref{cor-mix}
stated for the generating function of the cost random variable. 
More specifically, as $s\to 0^+$, we can write
$$
\left(1-\mathbb{E}\left[e^{-s (L_\cT\pm S_\cT)}\right] \right)\left(1-\mathbb{E}\left[e^{-s (L_\cT\mp S_\cT)}\right] \right) 
=\left(1-\E[e^{-s(|\xi_1|\pm \xi_1)}]\right)\F^2(1,s)\,,
$$
where $\phi_{\xi_1,\eta_1}$ in~\eqref{phi} has to be meant as a double Fourier-Laplace transform in $(\xi_1,\eta_1)$. 
It is obvious that, on the left-hand side of the above display, $1-\cG_{L_\cT+S_\cT}(s)\asymp s^{\hat\b/2}$ thanks to \ref{en: first item}, possibly with the logarithmic correction $\sqrt{\log(1/s)}$ when $\beta=1$ and an exact estimate~$\sim$ for $\beta\geq 1$. On the right-hand side, instead, observing that $\E[|\xi_1|\pm \xi_1]=\E[|\xi_1|]\neq 0$, we have $1-\cG_{|\xi_1|\pm \xi_1}\F^2(1,s)\sim k\,s^{\hat\b}$ for some positive constant $k$, multiplied by $\log(1/s)$ when $\beta=1$. Consequently, we obtain upper and lower bounds (matching for $\beta\geq 1$) on the leading term of the asymptotic expansion of the generating function of $L_\cT-S_\cT$.
The desired conclusion immediately follows by applying the aforementioned Tauberian theorems to $\cG_{C_\cT}(s)\sim  \cG_{L_\cT-S_\cT}(s)$. 
\end{enumerate}
\end{proof}
\begin{remark}
In the presence of spatio-temporal correlations, as explained in  Remark~\ref{rem: LW}, notice that the cost process $C_\cT\coloneqq L_\cT(S)-S_\cT$ defined in Lemma \ref{lemma 1} corresponds to the first-passage time for the L\'evy Walk.
\end{remark}

\subsection{Results for ladder costs associated with RWRSB }
The focus of the present section is the cost process $C= C(S,\z^{\pm})$
defined in Section \ref{MeB}, and called RWRSB.
We remind that the process  $C$ collects all the  scenery values $\zeta^{\pm}_k$ corresponding to the bonds that have been crossed in every jump of $S$, taking into account also the travel direction. In particular, the random scenery creates
a dependence between the increments of $C$,
and breaks down the i.i.d.\ assumption of the generalized Spitzer-Baxter identity
stated in Theorem \ref{GSB}.

In this subsection, we will study the first-ladder costs associated with a RWRSB,
and extend the results derived in the previous subsection to this general context. This analysis will lead to Theorem \ref{MainResult1}, that provides the asymptotic distribution of $C_\cT$ under the assumption that the underlying random walk has i.i.d.\ symmetric increments.
As stressed just after Theorem \ref{MainResult1}, our main result is now stated
and proved emphasizing all the different scenarios arising as the parameters of the problem vary.

As a final observation, we underline that the extension to ladder costs
$(C_{\cT_k})_{k\in\mathbbm{N}_0}$, where  $\cT_k$ is the ladder time corresponding
 to the $k$-th maximum value reached by $S$, will be directly dealt with along the proof of the main theorem.
\newline

First of all, let us fix some notation. We consider a symmetric underlying random walk $S$ on $\Z$ with i.i.d.\ discrete increments $(\xi_k)_{k\in\mathbbm{N}}$, whose corresponding characteristic function is, for $s\to 0^+$,
\be\label{eq: charXi}
\phi_{\xi_1}(s)=1-\nu  s^{\b}+o(s^{\b})
\ee
with $\b\in(0,2)$ and $\n \in\mathbbm{R}^+$. 
Since $\xi_1$ is a symmetric random variable, whereas $|\xi_1|$ is one-sided distributed and with 
$\mathbb P[|\xi_1|>x]=2\mathbb P[\xi_1>x]$,  the characteristic function of $|\xi_1|$ is of the form
(refer to Appendix~\ref{app-stable}),
for $s\to 0^+$,
\be\label{eq: charAbsXi}
\phi_{|\xi_1|}(s)=1+\hat{\n} s^{\hat{\b}}+o(s^{\hat{\b}})\,,
\ee
where
$$
\hat{\n}=  \begin{cases}
-\n [1-i\tan(\pi \b/2)]\,,\qquad &\mbox{for}\quad \b\in (0,1)\,,\\
 -\n [1-i\frac 2 \pi \log(1/s)]\,,\qquad &\mbox{for} \quad\b=1\,,\\
i\E[|\xi_1|]\,,\qquad &\mbox{for}\quad \b \in (1,2]\,.
\end{cases}
$$

We also explicitly write the common characteristic function of the random variables $\zeta_k^{\pm}$'s that we suppose are in the normal basin of attraction of a $\g_{\pm}$-stable law respectively, with $\g_{\pm}\in (0,2]\backslash\{1\}$: for $\th \to 0^+$ 
\begin{align}\label{eq: charf}
\phi_{\z_1^{\pm} }(\th) &
&=\begin{cases}
1-c_\pm\th^{\g_\pm}+o(\th^{\g_\pm})\,,\quad &\g_{\pm}=\hat{\g}_\pm\in (0,1)\,;\:c_\pm\in\mathbbm{C}\,,\:\Re(c_\pm)>0\,,\\
1+i\m_\pm\th- c_\pm\th^{\g_\pm}+o(\th^{\g_\pm})\,,\quad & {\g}_\pm\in (1,2]\,,\hat{\g}_\pm=1 \,;  \:\m_\pm\in \mathbbm{R}\,,\: c_\pm\in \mathbbm{C}\,,\\
1\,,\quad &\g_\pm=\hat{\g}_\pm=+\infty\;\implies\; \z_1^{\pm}\equiv 0\,.
\end{cases}
\end{align}

We will also need to refer to the even part of the scenery values
$\zeta_k^0= \frac{ \zeta_k^+ +\zeta_k^-}2$, for all $k\in\Z$,
 and assume that their common  characteristic function is given by
\begin{align}\label{eq: charPf}
\phi_{\z_1^0}(\th) &
&=\begin{cases}
1-c_0\th^{\g_0}+o(\th^{\g_0})\,,\quad &\g_0=\hat{\g}_0\in (0,1)\,;\:c_0\in \mathbbm{C}\,,\:\Re(c_0)>0\,,\\
1+i\m_0\th- c_0\th^{ \g_0}+o(\th^{\g_0})\,,\quad &  {\g}_0\in (1,2]\,, \hat{\g}_0=1\,;\; \:\m_0\in \mathbbm{R}\,,\: c_0\in \mathbbm{C}\,,\\
1\,,\quad &\g_0=\hat{\g}_0=+\infty\;\implies\; \z_1^0\equiv 0\,.
\end{cases}
\end{align}

Let us discuss  the relationship between the cost exponents $\hat{\g}_\pm$ and $\hat{\g}_0$, which will be crucial for the structure of the proof. By applying Lemma~\ref{lemma cauchy}, it is easy to verify that
\begin{itemize}
\item  if $\hat{\g}_+ \neq  \hat{\g}_-$, we have $\hat{\g}_0=\min\{\hat{\g}_+,\hat{\g}_-\}$
\item  if $\hat{\g}_+=\hat{\g}_-$:
\begin{itemize}
\item $\hat{\g}_0=\hat{\g}_+=\hat{\g}_-$, or
\item $\hat{\g}_0 > \hat{\g}_+=\hat{\g}_-\,$, including two possible cases:
\begin{enumerate}[label=(\alph*)]
\item $(0,1]\ni\hat{\g}_0 > \hat{\g}_+=\hat{\g}_-\in (0,1)\, \implies$\,
$\z_1^+=-\z_1^-+h(\z_1)$ with $h(\z_1)\neq 0$ and
$\hat{\g}_0\equiv\hat{\g}_{h(\z_1)}\,$;
\item $\hat{\g}_0=+\infty \implies \z_1^0\equiv 0\,.$
\end{enumerate}
\end{itemize}
\end{itemize}
\hspace{0.8cm}

Finally,
let $(\cT_{n})_{n\geq 0}$ be the sequence of ladder times of the control process $S$, namely the consecutive times when the random walk attains a new maximum value.  Formally, they are recursively defined by
$$\cT_0=0\,,\quad \cT_n:=\min\{k> \cT_ {n-1}\,:\, S_k > S_{\cT_{n-1}}\}
\quad \forall n\in\N\,,$$
so that $\cT_1\equiv \cT$.
Notice that by the Markov property, they give rise to a renewal process.
\\

To state the main theorem, let us recall the notation introduced in \eqref{notation}, 
with $\r_+\coloneqq \hat{\g}_+\,\b/2$ and $\r_0 \coloneqq \hat{\g}_0\, \hat{\b}/2$.
\begin{theorem}\label{MainResultExtended}
Let $C$  be the cost process defined in~\eqref{cost-RWRE}, and $(C_{\cT_n})_{n\geq 0}$ the corresponding ladder cost process.
Suppose that the underlying increments $\xi_k$'s satisfy~\eqref{eq: charXi}, and that $\z^+,\z^0$ are i.i.d.\ sequences of  non-negative  (similarly for non-positive) random variables satisfying~\eqref{eq: charf} and~\eqref{eq: charPf} respectively. Then, for all $n\in\N$, the following results hold as $x\to\infty$: 
 \begin{itemize}
\item If $\r_+<\r_0\,$, there exists an explicit constant $K\in\mathbbm{R}^+$  such that
$$  \P(C_{\cT_n}>x)\sim K\cdot n\cdot x^{-\r_+}\,.$$

\item If $\r_+\ge \r_0$, there exists an explicit slowly varying function 
$K_{up}(x)$ such that
\[
\P(C_{\cT_n}>x) \leq K_{up}(x)\cdot n\cdot x^{-\r_0}\,,
\]
where $K_{up}(x)\equiv k_{up}\in\mathbbm{R}^+$ if $\b\neq 1$,  
and $K_{up}(x)= k_{up}\sqrt{\log(x)}$ if $\b=1$.\\
Moreover, if $\beta\geq 1$, there exists an explicit slowly varying function $K_{low}(x)$ such that 
\be\label{eq:LB}
\P(C_{\cT_n}>x) \geq  K_{low}(x)\cdot n\cdot x^{- \min\left\{\rho_+,\hat\g_0,1/2\right\}}\,,
\ee
where $K_{low}(x)\equiv k_{low}\in\mathbbm{R}^+\,$ unless $\b=1$ with tail exponent $1/2$ in~\eqref{eq:LB}, or 
if $\hat\gamma_0=1/2$, for which logarithmic corrections appear. 

\noindent
In particular, when $\g_0\in(1,2]$, the tail exponent in the lower bound is precisely 
$1/2\equiv\rho_0$ and thus matches with that of the upper bound.
\end{itemize}
\end{theorem}

\medskip
\begin{remark}\label{rmk:cauchy}
We emphasize that the lower bound in Eq.~\eqref{eq:LB} is the only estimate where the assumptions on the normal domain of attraction for $\xi$, $\zeta^+$, and $\zeta^0$ are strictly necessary, along with the requirement that $\beta \geq 1$.
As will become clear in the proof, these stronger conditions are due to the application of Lemma~\ref{lemma product}, which requires the normal domain of attraction for the variables in the game,
and Lemma~\ref{lemma 1}, which provides different information depending on the value of $\beta\in(0,2]$.
Although these conditions may seem predominantly technical, it was not possible to circumvent these assumptions.
We also stress that the limiting cases 
$\g_+,\g_0 = 1$ have been excluded to simplify the computations, but they can be handled using the same argument given in the proof, with careful attention to the additional logarithmic corrections.
\end{remark}

\subsubsection{Preliminary tools}
Following \cite{blp}, it is convenient to introduce the family of random variables
$\mathcal N_n(k)$, for $k\in\Z$ and $n\in\N$, called \emph{local times on the bonds}
of the random walk $S$, and given by
\be
  \cN_n(k)\coloneqq \#\{j \in \{1,\ldots,n\} \,:\, [k-1,k] \subseteq
  [S_{j-1}, S_j]\} \, ,
\ee
where the notation $[a, b]$ denotes the closed interval
between the real numbers $a$ and $b$, irrespective of their order.
In other words, $\mathcal N_n(k)$ is the number of times that
the walk $S$ travels the bond $[k-1, k]$
and in turn can be split into $\cN_n(k)=\cN_n^-(k)+\cN_n^+(k)$ where
\bea
\cN_n^-(k) &\coloneqq &\#\{j \in \{1,\ldots,n-1\} \,:\, S_j\geq k\,, S_{j+1}\leq k-1\} \, ,\nonumber\\
\cN_n^+(k) &\coloneqq& \#\{j \in \{1,\ldots,n-1\} \,:\, S_{j+1}\geq k\,, S_j\leq k-1\} \, ,\nonumber
\eea
denote the number of crossings of $[k-1,k]$, respectively, from right to left and from left to right. In the following, it will be useful to express the first-ladder height and length of the process $S$ in terms of local times. An easy check shows that
\begin{subequations}
\bea
\mbox{if} \; k\leq 0\,, \quad & \cN_\cT^+(k)=\cN_\cT^-(k)=\frac{\cN_\cT(k)}2\,,\quad &\sum_{k\leq 0} \cN_\cT(k)=L_\cT(S)-S_\cT\,,\\
\mbox{if} \; k>0\,, \quad & \cN_\cT(k)=\begin{cases}
1 \; &k\leq S_\cT\\
0\; & k>S_\cT
\end{cases}\,,\quad &\sum_{k> 0} \cN_\cT(k)=S_\cT\,.
\eea
\label{HeLinLocalTimes}
\end{subequations}
Other useful probabilistic results are postponed to Appendix \ref{app-prob}.

\subsubsection{Proof of  Theorem \ref{MainResult1}}
We provide the proof of Theorem \ref{MainResultExtended}, which contains the statements  of  Theorem \ref{MainResult1} in an extended version.

\begin{proof}[Proof of Theorem \ref{MainResultExtended}]
As underlined in \cite{blp}, the introduction of the local times $\mathcal N_n(k)$'s
provides an interpretation of the collision times $(T_n)_{n\in\N_0}$ (defined in \eqref{collisiontime})
 as a random walk in random scenery on bonds.
More generally, the cost process of the form \eqref{cost-RWRE} satisfies the following identity
 \begin{align}
  C_{\cT}
&= \sum_{k\in\Z} \left[\cN_\cT^+(k) \z_k^++\cN_\cT^-(k)\z_k^- \right]\label{colltime_0}\\
&= \sum_{k\leq 0}\cN_\cT(k) \z_k^0+\sum_{k>0}\cN_\cT(k)\z_k^+\,.\label{colltime}
  \end{align}
Since the local times are functions of $S$ only, it turns out that, given $S$, $C_\cT$ is a sum of independent random variables.

Using \eqref{colltime}, we can rearrange the terms inside the characteristic function of $C_\cT$ and get, for $s\in \R$,
\begin{align}
 \mathbb{E} \left[
 e^{ i s C_{\cT}}\right]\nonumber
& =  \mathbb{E} \left[
\exp \left(\: i s \sum_{k=1}^{\cT} \eta_k    \:\right)  \right]
 =  \mathbb{E} \left[
 e^{ i s\left[\sum_{k\leq 0}\cN_\cT(k) \z_k^0+\sum_{k>0}\cN_\cT(k)\z_k^+\right ]}
 \right]\\ 
& =  \mathbb{E} \left[ \left. \mathbb{E} \left[
\exp \left(\: i s \sum_{k\leq 0}\cN_\cT(k) \z_k^0\:\right)\exp \left(\: i s\sum_{k>0}\cN_\cT(k)\z_k^+  \:\right)
 \right| S \right]\right]\nonumber\\
 & = \mathbb{E} \left[
\prod_{k \leq 0} \phi_{\z_1^0}(s \cN_{\cT}(k)) \cdot \left(\phi_{\z_1^+}(s) \right) ^{S_\cT}
\right] \label{splitting}\,,
\end{align}
where in the last line we used the conditional independence mentioned above.

Hereinafter we will move to the generating function formalism, which is justified by  the additional hypothesis $\z_1^+,\z_1^0\geq 0$.
In particular, we will establish upper and lower bounds for the generating function
$$\mathcal{G}_{C_\cT}(s)\coloneqq \E[e^{-sC_\cT}]\,,$$
using the analog of~\eqref{splitting} for generating functions. This will allow us to determine, respectively, lower and upper bounds for the tail of the random variable $C_\cT$ by means of the Tauberian theorem for dominated variation~\cite[Thm.~2.10.2]{bingham}.
We will split our analysis depending on whether the value of $\g_0$ is infinite or finite.  Accordingly to the specific regimes of values of $\b$, $\g_+$ and $\g_0$, 
the results collected in Theorem \ref{MainResultExtended} will be derived.

\vspace*{0.5cm}
\paragraph{\bf Case  $\hat\g_0=+\infty \:\implies\: \r_+<\r_0\,$} Here we get a direct result, since local times disappear. Explicitly, recalling that -- according to Eq.~\eqref{eq: charf} -- for $s\to 0^+$
\be\label{eq: genf}
\mathcal{G}_{\z_1^+}(s)=1-\tilde \m_{+}s^{\hat\g_+}+o(s^{\hat\g_+})\quad \mbox{with}\quad \tilde\m_{+}=\begin{cases}
 \frac{\Re(c_+)}{\cos(\pi\hat{\g}_+/2)}\qquad& \mbox{if}\quad\hat\g_+\in(0,1)\,,\\
\m_+\qquad&\mbox{if}\quad\hat\g_+=1\,,
\end{cases}
\ee
we have
$$
\cG_{ C_{\cT}}(s)
= \E\left[ \left(\cG_{\z_1^+}(s) \right) ^{S_\cT}\right]=\E\left[e^{-(\tilde \mu_++o(1))s^{\hat\gamma_+}S_\cT}\right]\sim\mathcal{G}_{S_\cT}(\tilde\m_{+}s^{\hat\g_+}) \quad\text{as}\quad s\to0^+.
$$
Hence, by applying the Tauberian theorem~\cite[Thm.~1.7.6]{bingham} as in Corollary~\ref{cor-leapRW}, we immediately obtain
\be\label{eq:rho+}
\mathbb{P}(C_\cT>x)\sim K\cdot x^{-\hat\g_+\b/2}\quad\text{as}\quad x \to\infty\,,
\ee
with $K= \frac{\sqrt{\nu}\tilde \mu_+^{\b/2}\,\F(1,0)}{\G(1-\hat\g\b/2)} $.

\vspace*{0.5cm}
\paragraph{\bf Case $\hat\g_0<\infty$ (equiv. $0<\hat\g_0\leq 1$)} We start from Eq.~\eqref{splitting}, that is
\be\label{splitting - gen} 
 \mathcal{G}_{C_\cT}(s)=\E[e^{-sC_\cT}]= \mathbb{E} \left[Z_1\cdot Z_2 \right] \,, \qquad s\geq 0\,,
\ee
with
\[
Z_1\coloneqq  \prod_{k \leq 0} \mathcal{G}_{\z_1^0}(s \cN_{\cT}(k))\qquad\text{and}\qquad Z_2\coloneqq\left(\mathcal{G}_{\z_1^+}(s) \right) ^{S_\cT}\,.
\]
We intend to apply Lemma~\ref{lemma cauchy}. Using Eq.~\eqref{eq: genf}, observe that as in the previous paragraph $\E[Z_2]$ is trivially asymptotically equivalent to $\mathcal{G}_{S_\cT}(\tilde\m_{+}s^{\hat\g_+})=1-c_2s^{\g_2}+o(s^{\g_2})$, with $\g_2=\hat\g_+\b/2$ and $c_2$ given by Corollary~\ref{cor-leapRW}, so we have to focus our efforts on the random variable $Z_1$. By the moment monotonicity, given $0<p\leq r$ we know that
\[
 \left(\E[e^{-s\zeta_1^0 p}]\right)^{1/p}\leq \left(\E[e^{-s\zeta_1^0 r}]\right)^{1/r}\,,
\]
which can be used to obtain a lower bound ($p=1$ and $r=\cN_\cT(k)$) and an upper bound ($p=\cN_\cT(k)$ and $r=L_\cT-S_\cT$) on the random variable $Z_1$:
\be\label{bound Z1}
\E[Z_1^\text{low}\cdot Z_2]\leq \cG_{C_\cT}(s)\leq \E[Z_1^\text{up}\cdot Z_2],\quad \text{where} \quad\begin{cases}
Z_1^\text{low}\coloneqq \left (\cG_{\zeta_1^0}(s)\right)^{L_\cT-S_\cT},\\
 Z_1^\text{up}\coloneqq \cG_{\zeta_1^0}(s(L_\cT-S_\cT))\,.
\end{cases}
\ee

\vspace*{0.2cm}
\subparagraph{\it Lower bound:} We can replace the generating function in $Z_1^\text{low}$ with its expansion for $s\to 0^+$ according to \eqref{eq: charPf}
\[
\mathcal{G}_{\z_1^0}(s)=1-\tilde \m_0 s^{\hat\g_0}+o(s^{\hat\g_0})\quad \mbox{with}\quad \tilde\m_{0}=\begin{cases}
 \frac{\Re(c_0)}{\cos(\pi\hat{\g}_0/2)}\qquad& \mbox{if}\quad\hat\g_0\in(0,1)\,,\\
\m_0\qquad&\mbox{if}\quad\hat\g_0=1\,,
\end{cases}
\]
and obtain
$
\E[Z_1^\text{low}]\sim \cG_{L_\cT-S_\cT}(\tilde \mu_0 s^{\hat\g_0})\,.
$
By  Lemma~\ref{lemma 1}, when $\beta\in[1,2)$ we therefore have
$
\E[Z_1^\text{low}]=1-c_1s^{\g_1}+o(s^{\g_1})
$
with $\gamma_1=\hat\g_0\hat\b/2$ and an additional logarithmic factor $\sqrt{\log(1/s)}$ if $\b\equiv\hat\b=1$; see also Corollary~\ref{cor-lengthRW}.
Using the estimates in Lemma~\ref{lemma cauchy}, we can finally write (with the same logarithmic correction) 
$$
\mathcal{G}_{C_\cT}(s)\geq \E[Z_1^\text{low}]\E[Z_2]-\sqrt{\var(Z_1^\text{low})\var(Z_2)}
=1-c_{low}s^{\min\{\r_0,\r_+\}}+o(s^{\min\{\r_0,\r_+\}})\,,
$$
given that $\sqrt{\var(Z_1^\text{low})\var(Z_2)}=\sqrt{(2-2^{\r_0})(2-2^{\r_+})c_1c_2}s^{\frac{\r_0+\r_+}2}.$

\noindent
Similarly, when $\beta\in(0,1)$ we have $1-\E[Z_1^\text{low}]\asymp s^{\g_1}$: denoting by $c_1^\pm$ the upper and lower constants coming from the dominated variation, it is sufficient to replace $c_1$ with $c_1^+$, and $\var(Z_1^{low})\leq 2c_1^+ s^{\rho_0}+o(s^{\rho_0})$. 

\vspace*{0.5cm}
\subparagraph{\it Upper bound:} Since by assumption $\zeta_1^0$ and $L_\cT-S_\cT$ are independent random variables, the law of total expectation gives
\[
\E[Z_1^\text{up}]=\cG_{\zeta_1^0\cdot(L_\cT-S_\cT)}(s).
\]
We now split the analysis according to the value of $\hat\g_0$, focusing on the case $\hat \beta=1$. 

If $\hat\g_0=1\,$ (equiv. $\g_0\in (1,2]$), by exploiting Lemma~\ref{lemma product} we can affirm that the asymptotic behavior of $Z_1^\text{up}$ is ruled by $L_\cT-S_\cT$, that is the random variable with slower tail decay:  
\[
\E[Z_1^\text{up}]= 1-c_1 s^{\g_1}+o(s^{\g_1}),
\]
with $\g_1=\hat\b/2$, except for the case $\b\equiv\hat\b=1$ where the constant $c_1$ is replaced by a slowly varying function $c_1 \sqrt{\log(1/s)}$.
Relying again on Lemma~\ref{lemma cauchy}, we get
$$
\mathcal{G}_{C_\cT}(s)\leq \E[Z_1^\text{up}]\E[Z_2]+\sqrt{\var(Z_1^\text{up})\var(Z_2)}
=1-c_{up}s^{\min\{\r_0,\r_+\}}+o(s^{\min\{\r_0,\r_+\}})\,,
$$
with the same (eventual) logarithmic correction as before. In particular, observe that when $\r_+<\r_0$, combining upper and lower bounds, we obtain $\cG_{C_\cT}(s)\sim \E[Z_2]$. Hence we simply recover~\eqref{eq:rho+}.

If $\hat\g_0\equiv\g_0\in (0,1)$, minor changes are required: by Lemma~\ref{lemma product} the tail decay of $Z_1^{up}$ is now determined by $\g_1=\min\{\hat\g_0,\hat\b/2\}$, with a logarithmic correction
\[
\begin{cases}
\sqrt{\log(1/s)}\quad &\text{if}\quad \g_0>1/2,\: \beta\equiv \hat \b=1,\\
\log(1/s)\quad &\text{if}\quad  \g_0=1/2,\: \beta>1,\\
\log^{3/2}(1/s)\quad & \text{if}\quad \g_0=1/2,\: \beta\equiv \hat \b=1.
\end{cases}
\]

\vspace*{0.5cm}

As a final point, we generalize all these asymptotic results to the law of ladder costs.
Let $(\cT_{n})_{n\geq 0}$ be the sequence of ladder times of the random walk  $S$ defined in the introduction. Notice that, for all $n\geq 1$, the equality \eqref{colltime_0} still holds by replacing $\cT$ with $\cT_n$, together with the identity $L_{\cT_n}(S)= \sum_{k\in\Z}\cN_{\cT_{n}}(k)$. Moreover, $S_{\cT_n}=\sum_{k>0}\mathbbm{1}_{(0\,,\,S_{\cT_n}]}(k)\,$.
In order to recast the characteristic or generating function of the cost $C_{\cT_n}$ in a convenient manner, we need to introduce a further definition concerning the local times. Let $\cN_{(t_0,t_f]}(k)$ be the number of crossings of $[k-1,k]$ observed in a specified time window $(t_0,t_f]$, that is
\be
  \cN_{(t_0,t_f]}(k)\coloneqq \#\{j \in \{t_0+1,\ldots,t_f\} \,:\, [k-1,k] \subseteq
  [S_{j-1}, S_j]\} \, .
\ee
Hence we get
\begin{align*}
C_{\cT_n}
&=\sum_{k\in\mathbbm{Z}}\left[\cN_{\cT_n}^+(k)\z_k^++\cN_{\cT_n}^-(k)\z_k^-\right ]\nonumber\\
& =  \sum_{k\leq 0} \cN_{(0,\cT_1]}(k)\z_k^0+\sum_{k\leq S_{\cT_1}} \cN_{(\cT_1,\cT_2]}(k)\z_k^0+\dots+\sum_{k\leq S_{\cT_{n-1}}} \cN_{(\cT_{n-1},\cT_n]}(k)\z_k^0\nonumber\\
&\hspace{1.5cm} +\sum_{k>0}\cN_{(0,\cT_1]}(k)\z_k^++\sum_{k>S_{\cT_1}}\cN_{(\cT_1,\cT_2]}(k)\z_k^++\dots +\sum_{k>S_{\cT_{n-1}}}\cN_{(\cT_{n-1},\cT_n]}(k)\z_k^+\\
&=\sum_{k\leq 0} \cN_{\cT_n}(k)\z_k^0+\sum_{k\in(0, S_{\cT_1}]} \cN_{(\cT_1,\cT_n]}(k)\z_k^0+\dots+\sum_{k\in(S_{\cT_{n-2}},S_{\cT_{n-1}}]} \cN_{(\cT_{n-1},\cT_n]}(k)\z_k^0\\
&\hspace{1.5cm} +\sum_{k>0}\mathbbm{1}_{(0,S_{\cT_n}]}(k)\z_k^+\,,
\end{align*}
and also
\begin{align*}
 \mathcal{G}_{C_{\cT_n}}(s)&\coloneqq\E \left[ e^{ - s C_{\cT_n}}\right]\nonumber
=  \mathbb{E} \left[ \left. \E \left[ e^{ - s\sum_{k\in\mathbbm{Z}}\left[\cN_{\cT_n}^+(k)\z_k^++\cN_{\cT_n}^-(k)\z_k^-\right ]}
 \right| S \right]\right]\nonumber\\
 & = \mathbb{E} \left[
\prod_{k \leq 0} \mathcal{G}_{\z^0_1}(s \cN_{\cT_n}(k))\prod_{k \in(0,S_{\cT_1}]} \mathcal{G}_{\z^0_1}(s \cN_{(\cT_1,\cT_n]}(k))\dots \right.\\
&\hspace{3cm} \left.\dots \prod_{k \in(S_{\cT_{n-2}},S_{\cT_{n-1}}]} \mathcal{G}_{\z^0_1}(s \cN_{(\cT_{n-1},\cT_n]}(k)) \cdot \left(\mathcal{G}_{\z^+_1}(s) \right) ^{S_{\cT_n}}
\right] \label{splitting}\,.
\end{align*}
Then, when we  consider upper and lower bounds for $\mathcal{G}_{C_{\cT_n}}(s)$ (see Eq.~\eqref{bound Z1}), the relevant quantities to deal with are $S_{\cT_n}$ and
$$
\sum_{k \leq 0} \cN_{\cT_n}(k)+\sum_{k \in(0,S_{\cT_1}]}  \cN_{(\cT_1,\cT_n]}(k)+\dots+\sum_{k \in(S_{\cT_{n-2}},S_{\cT_{n-1}}]} \cN_{(\cT_{n-1},\cT_n]}(k)=L_{\cT_n}-S_{\cT_n}\,.
$$
More precisely, we are interested in their generating functions. Due to the renewal structure of the processes $(S_{\cT_n})_{n\geq 0}$ and $(L_{\cT_n}(S))_{n\geq 0}$, the ladder random variables can be seen as the sum of $n$ i.i.d.\ first-ladder quantities. Thus, the previous results can be immediately generalized: we just have to introduce a multiplicative factor $n$, which stems from the factorization of the expectations, in front of the slowly varying functions $K$, $K_{up}(x)$ and $K_{low}(x)\,$.
\end{proof}
\vspace*{0.2cm}

\subsection{Results for the random walks in random media Y}
Consider the random walk on random medium $Y$ defined in Eq.~\eqref{Y},
under the hypothesis that the underlying random walk $S$
has symmetric i.i.d.\ increments $(\xi_k)_{k\in\N}$.
As already mentioned, the  first-ladder height $Y_{\cT}$ and the first-ladder length $L_{\cT}(Y)$ can be equivalently interpreted as first-ladder costs expressed as RWRSB with appropriate sceneries. We then use our main Theorem \ref{MainResultExtended}, together with some simplifications that occur in this setting, to prove Corollaries \ref{leapoverY-app}, \ref{lengthY}, and \ref{gLG}.

In the following, we will refer to the notation introduced in previous sections, except for Eq.~\eqref{eq: charf} that can be slightly simplified by considering only
\begin{align*}
\phi_{\z_1 }(\th) &
=\begin{cases}
1-c e^{-i\frac\pi 2 \g}\th^{\g}+o(\th^{\g})\,,\quad &\g=\hat{\g}\in (0,1)\,;\:c\in\mathbbm{R}^+\,,\\
1+i\m\th+o(\th)\,,\quad & {\g}\in (1,2]\,,\hat{\g}=1 \,;  \:\m\in \mathbbm{R}^+\,.
\end{cases}
\end{align*}

\subsubsection{ First-ladder height $Y_{\cT}$. }
\begin{proof}[Proof of Corollary \ref{leapoverY-app}]
Recall that $Y_{\cT}$ can be seen as the value at time $\cT$ of a
RWRSB driven by $S$ and with scenery  $\z^+=-\z^-=\z$.
Notice that with this choice
$\hat{\g}\equiv \hat{\g}_+=\hat{\g}_-$, and $\hat{\g}_0=+\infty\,$ since $\zeta_1^0\equiv 0$, implying that $\r_+<\r_0\,$. 

\noindent
Therefore Theorem \ref{MainResultExtended} applies and we get:

\begin{itemize}
\item If $\g\in(1,2]$, which means $\hat{\g}=1$, then $\E[e^{-sY_\cT}]=\E[e^{-s(\m+o(1))S_\cT}]$ and
$Y_\cT$ is in the normal basin of attraction of a stable law with parameter $\b/2\,$. Explicitly, by applying Corollary~\ref{cor-leapRW}, we get
\be\label{leapoverY-app-case1}
\P(Y_\cT> x) \sim \frac{\sqrt{\nu}\mu^{\b/2}}{\G(1-\b/2)}\, \F(1,0)\,x^{-\b/2}\,,
\qquad \mbox{as } x\to\infty\,\,.
\ee
\item If $\gamma\in(0,1)$, then $\hat{\g}=\gamma$ and we have $\E[e^{-sY_\cT}]=\E[e^{-s^\g(c+o(1))S_\cT}]$  from which
\be\label{leapoverY-app-case2}
  \P(Y_\cT > x)  \sim \frac{\sqrt{\nu}c^{\b/2}}{\G(1-\g\b/2)} \,\F(1,0)\,x^{-\g\b/2}\,,
\qquad \mbox{as } x\to\infty\,\,.
\ee
\end{itemize}
\end{proof}

\subsubsection{ First-ladder length.}\label{GP}
\begin{proof}[Proof of Corollary \ref{lengthY}]
Recall that $L_{\cT}(Y)$ can be seen as the value at time $\cT$ of a
RWRSB driven by $S$ and with scenery $\z^+=\z^-=\z$.
Notice that with this choice  $ \hat\g\equiv \hat{\g}_+=\hat{\g}_-=\hat{\g}_0$
and therefore $\r_+\ge\r_0\,$. Following Theorem \ref{MainResultExtended},
we derive the next distinct cases.
\begin{itemize}
\item  If $\g\in(1,2]\,$, that is $\hat\g_0=\hat\g_+=1$, we obtain:
\medskip
\begin{enumerate}[label=(\roman*)]
\item If $E(|\xi_1|)<\infty$, which means $\b\in(1,2)$ and $\r_0<\r_+$, then
$L_{\cT}(Y)$ is dominatedly varying with index $1/2$,
$$
\P(L_{\cT}(Y) > x) \asymp x^{-1/2}\,,
\qquad \mbox{as } x\to\infty\,\,;
$$
\item If $\hat \b\equiv\b=1$, that is $\r_0=\r_+=1$, then
$$
\P(L_{\cT}(Y) > x) \asymp \sqrt{\log(x)}x^{-1/2}\,,
\qquad \mbox{as } x\to\infty\,\,.
$$
\end{enumerate}
\item If $\g\equiv\g_0=\g_+\in(0,1)$, instead, we have:
\medskip
\begin{enumerate}[label=(\roman*)]
\item If $E(|\xi_1|)<\infty$, that is $\b\in(1,2)$,
$$
k_{low}\cdot x^{-\min\left\{\frac 1 2,\frac{\g\b}2 \right\}}  \le \P(L_{\cT}(Y) > x)  \leq k_{up}\cdot x^{-\g/2}\,, \qquad \mbox{as } x\to\infty\,;
$$
\item If $\hat \b\equiv\b=1$, which means $\r_0=\r_+=\g$, then 
$$
 k_{low}\cdot x^{-\frac{\g}2 }  \le \P(L_{\cT}(Y) > x)  \leq k_{up}\cdot \sqrt{ \log(x)}\,x^{-\g/2}\,, \qquad \mbox{as } x\to\infty\,\,.
$$
\end{enumerate}
\end{itemize}
\end{proof}
\subsubsection{Continuous first-passage time  for  the generalized L\'evy-Lorentz gas}\label{sec: appLLG}
\begin{proof}[Proof of Corollary \ref{gLG}]
Recall that  the continuous first-passage time $\cT(X)=L_\cT(Y)-Y_\cT$
can be seen as the value at time $\cT$ of a  RWRSB driven by $S$ and
with scenery $\z^+ \equiv 0$ and $\z^-=2\z$. Notice that with this choice
 $\hat{\g}\equiv\hat{\g}_-=\hat{\g}_0 < \hat{\g}_+= +\infty$, and hence
 $\min\{\r_0, \r_+\}=\r_0=\hat\g\hat\b/2 $.
 As a consequence, Eq.~\eqref{splitting - gen} simply becomes
$$ \mathcal{G}_{L_\cT(Y)-Y_\cT}(s)= \mathbb{E} \left[ \prod_{k \leq 0} \mathcal{G}_{\z_1}(s \cN_{\cT}(k))\right] \,.$$
As before, following Theorem \ref{MainResultExtended},
we have to study different cases:
\begin{itemize}
\item  If $\g\in(1,2]\,$, that is $\hat{\g}=1$, we obtain the following results.\smallskip
\begin{enumerate}[label=(\roman*)]
\item If $E(|\xi_1|)<\infty$, with $\b\in(1,2)$ and $\hat\b=1$, then $\cT(X)$ is dominatedly varying of order $1/2$
$$
\P(\cT(X) > t) \asymp t^{-1/2}\,,
\qquad \mbox{as } t\to\infty\,\,;
$$
\item If $\hat\b\equiv\b=1$, then
$$
\P(\cT(X) > t) \asymp \sqrt{\log(t)}t^{-1/2}\,,
\qquad \mbox{as } t\to\infty\,\,.
$$
\end{enumerate}
\item If $\g=\hat\g\in(0,1)$,  we get: \smallskip
\begin{enumerate}[label=(\roman*)]
\item If $E(|\xi_1|)<\infty$, that is $\b\in(1,2)$, we can conclude that
$$
K_{low}(t) \cdot t^{-\min\left\{\g,\frac 1 2 \right\}}  \le \P(\cT(X) > t)  \leq k_{up}\cdot t^{-\g/2}\,, \qquad \mbox{as } t\to\infty\,\,,
$$
with $K_{low}(t)=K_{low}\log(t)$ if $\g=1/2\,$, constant otherwise.\\
\item If $\hat\b\equiv\b=1$, we obtain
$$
K_{low} (t)\cdot t^{-\min\left\{\g,\frac 1 2 \right\}}  \le \P(\cT(X) > t)  \leq k_{up}\cdot\sqrt{\log(t)} t^{-\g/2}\,, \qquad \mbox{as } t\to\infty\,\,,
$$
with
$$K_{low}(t)=k_{low}\begin{cases}
\sqrt{\log(t)}\qquad & \mbox{if}\quad\g>1/2\,,\\
 \log^{3/2}(t) \qquad & \mbox{if}\quad\g=1/2\,,\\
1\qquad & \mbox{if}\quad\g<1/2\,,
\end{cases}$$
\end{enumerate}
\end{itemize}
\end{proof}

\vspace{0.3cm}

\bigskip
{\bf Acknowledgements}
The work of A.~Bianchi is partially funded by the University of Padova through
the BIRD project 198239/19  \emph{``Stochastic processes and applications to disordered systems''}.
 The work of  G.~Cristadoro and  G.~Pozzoli is partially 
funded by the PRIN Grant 2017S35EHN \emph{``Regular and stochastic behavior in dynamical systems''} from the Ministero dell'Universit\`a e della ricerca (MIUR, Italy). The research of G.~Pozzoli is supported by the CY Initiative of Excellence through the grant Investissements d'Avenir ANR-16-IDEX-0008; part of this work was done while G.~Pozzoli was a PhD student at Universit\`a degli studi dell'Insubria (Dipartimento di Scienza e Alta Tecnologia and Center for Nonlinear and Complex Systems), and supported by Istituto Nazionale di Fisica Nucleare - Sezione di Milano.

\bigskip

\appendix

\section{Lemma}
\begin{lemma}\label{Wiener-Hopf} Let
\begin{align}
P(z, t)=\sum_{n=0}^{\infty} z^{n} p_{n}(t), \qquad & Q(z, t)=\sum_{n=0}^{\infty} z^{n} q_{n}(t) \,,\\
P^{*}(z, t)=\sum_{n=0}^{\infty} z^{n} p_{n}^{*}(t), \qquad& Q^{*}(z, t)=\sum_{n=0}^{\infty} z^{n} q_{n}^{*}(t)\,,
\end{align}
where $p_{0}(t) \equiv q_{0}(t) \equiv p_{0}^{*}(t) \equiv q_{0}^{*}(t) \equiv 1$; and for $n \geq 1$, $p_{n}$ and $p_{n}^{*}$ as functions of $t$ are Fourier transforms of measures with support in $(0, \infty)$; $q_{n}$ and $q_{n}^{*}$ as functions of $t$ are Fourier transforms of measures in $(-\infty, 0]. $ Suppose that for some $z_{0}>0$ the four power series converge for $z$ in $\left(0, z _{0}\right)$ and all real $t,$ and the identity
$$
P(z, t) Q^{*}(z, t) \equiv P^{*}(z, t) Q(z, t)
$$
holds there. Then
$$
P \equiv P^{*}, \quad Q \equiv Q^{*}\,.
$$
\end{lemma}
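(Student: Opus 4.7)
The plan is a standard Wiener--Hopf uniqueness argument: equate coefficients of $z^n$ on both sides of $P(z,t)Q^*(z,t)=P^*(z,t)Q(z,t)$ and proceed by induction on $n$, using the disjoint-support condition to conclude that each coefficient pair coincides.

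First I would Cauchy-multiply the power series. Since all four series converge on $(0,z_0)$ for every real $t$, the identity is equivalent to the equality of coefficients
\[
  \sum_{k=0}^{n} p_k(t)\,q_{n-k}^*(t) \;=\; \sum_{k=0}^{n} p_k^*(t)\,q_{n-k}(t)
\]
for every $n\ge 0$. For $n=0$ this reads $1=1$, so the base case of the induction is trivial. For the inductive step I would assume that $p_k\equiv p_k^*$ and $q_k\equiv q_k^*$ for all $0\le k<n$. In the two sums above, the only indices at which the summands are not already known to coincide are $k=0$ (giving $q_n^*$ on the left and $q_n$ on the right) and $k=n$ (giving $p_n$ on the left and $p_n^*$ on the right), while all middle indices $0<k<n$ cancel pairwise. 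This collapses the identity to
\[
  p_n(t) - p_n^*(t) \;=\; q_n(t) - q_n^*(t).
\]

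Now comes the step that carries the content of the lemma. The left-hand side is the Fourier transform of the finite signed measure $\mu_n-\mu_n^*$, where $\mu_n,\mu_n^*$ are the measures on $(0,\infty)$ whose Fourier transforms are $p_n,p_n^*$. The right-hand side is the Fourier transform of the finite signed measure $\nu_n-\nu_n^*$ supported in $(-\infty,0]$. Since the two Fourier transforms agree and the Fourier transform is injective on finite signed Borel measures on $\R$, the signed measure $\mu_n-\mu_n^*=\nu_n-\nu_n^*$ is supported in $(0,\infty)\cap(-\infty,0]=\emptyset$, hence it is the zero measure. Therefore $p_n\equiv p_n^*$ and $q_n\equiv q_n^*$, closing the induction.

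The main obstacle, such as it is, lies in this last support-based cancellation: one must verify that the injectivity of the Fourier transform applies not just to positive measures but to the finite signed measures obtained by subtraction, which is standard but worth stating explicitly. Everything else is bookkeeping on the Cauchy product. Once the inductive step is established, it immediately yields $P\equiv P^*$ and $Q\equiv Q^*$ coefficient by coefficient.
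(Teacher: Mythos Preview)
Your argument is correct and is exactly the standard proof the paper invokes: the paper does not give its own proof but simply cites Chung, \textit{A Course in Probability Theory}, Section~8.4, and the argument there is precisely the induction on the Cauchy-product coefficients together with the disjoint-support/Fourier-injectivity step you wrote out. There is nothing to add.
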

\proof{see Section 8.4.1 of \cite{Chung2001}}.
\qed

\section{Probabilistic tools}\label{app-prob}
\subsection{Basic definitions and results for random variables in the domain of attraction of a $\gamma$-stable distribution}\label{app-stable}

We will refer to~\cite[Ch.~2]{IL71}\cite{J, bingham} and references therein.
A random variable $X$ is $\gamma$-stable, with $\gamma\in (0,2]$, if $\forall\, s \in \R$ it has characteristic function  
\begin{multline}\label{eq:CharStable}
    \phi_X(s)
    \coloneqq\E[e^{isX}]
    =  \exp{\left\{-c |s|^\gamma\left[ 1-i\theta \,\sgn(s) \mathfrak{w}(s,\gamma)\right]+i\mu s\right\}}\\
    \text{with}\quad
    \mathfrak w(s,\gamma)\coloneqq
    \begin{cases}
        \tan(\tfrac \pi 2 \gamma),\quad & \gamma\neq 1,\\
       - \tfrac 2 \pi \log |s|,\quad & \gamma= 1,
    \end{cases}
\end{multline}
where $\mu\in\R$, $c> 0$, and $|\theta|\leq 1$ is a skewness parameter. Note that $\mu\equiv \E[X]$ when $\gamma>1$. In the limit case $\gamma=2$, $\theta$ is irrelevant and usually set equal to zero. If $\theta=1$, $X$ is \emph{spectrally positive} and the generating function is also well-defined and given by, $\forall \, s\geq 0,$ 
\begin{equation}\label{eq:GenStable}
     \cG_X(s)
    \coloneqq \E[e^{-sX}]
    = \begin{cases}
        \exp{\left\{ -\tfrac c{\cos \tfrac\pi 2 \gamma} s^\gamma-\mu s\right\}},\quad & \gamma \neq 1,\\
        \exp{\left\{\tfrac 2 \pi c s\log s -\mu s\right\}},\quad & \gamma = 1\,.
    \end{cases}
\end{equation}

We say that a random variable $Y$ is \emph{in the domain of attraction} of a $\gamma$-stable law, with $\gamma\in(0,2]$, if~\cite[Thm.~2.6.5]{IL71} 
\begin{equation*}
    \phi_Y(s)=1+i\mu s-c|s|^\gamma\ell(s)[1-i\theta\,\sgn(s) \mathfrak w(s,\gamma)]+o(|s|^\gamma\ell(s)\mathfrak w(s,\gamma)) \quad\text{as}\quad s \to 0,
\end{equation*}
where $\ell(s)$ is a positive \emph{slowly varying function}\footnote{$\ell(s)$ is a slowly varying function as $s\to 0$ if, for all $a>0$, $\lim_{s\to 0} \ell(as)/\ell(s)=1$.} at zero. In the case of a slowly varying function $\ell(s)$ that is merely a constant, the term \emph{normal} domain of attraction is used~\cite[Thm.~2.6.6-7]{IL71}.
Let us stress that $\theta=0$ if $Y$ has a symmetric distribution, and $\theta\in\{-1,+1\}$ if $Y$ has a one-sided distribution. In particular, for $\gamma\in(0,1)$ and $Y\geq 0$,
\begin{align*}
\phi_{\pm Y}(s)
&=1-c'e^{\mp i\tfrac \pi 2 \gamma} \ell(s)s^\gamma+o(\ell(s)s^\gamma),\quad c'=\tfrac{c}{\cos(\tfrac\pi 2 \gamma)},\quad \text{as}\quad s\to 0^+.
\end{align*}

As long as $\gamma\in(0,2)$, we can equivalently characterize the random variable $Y$ by saying that the tails of the distribution are \emph{regularly varying}\footnote{ $f(x)$ is \emph{regularly varying} of index $\rho$ as $x\to \infty$ if $f(x)=x^\rho \ell(x)$ for some slowly varying function $\ell(x)$ at infinity. Equivalently, $\lim_{x\to \infty} f(ax)/f(x)=a^\rho$ for all $a>0$.} of index $-\gamma$ at infinity~\cite[Thm.~2.6.1]{IL71}.

The relationship between the tails of the distribution function and the behavior around zero of the characteristic and generating functions is established by Abelian and Tauberian theorems for Fourier and Laplace-Stieltjes transforms, respectively. As we make extensive use of the Tauberian direction in the main text, we will provide ourselves with easy reference to explicit formulae.

Let $\gamma\neq 1 $ for simplicity. By the Karamata's Tauberian Theorem~\cite[Thm.~1.7.6]{bingham} for Laplace-Stieltjes transforms and~\cite[Thm.~1.7.2]{bingham}, we recall that by defining the possibly centred random variable 
\[
\tilde Y\coloneqq \begin{cases}
    Y\quad &\text{if}\quad \gamma\in(0,1)\\
    Y-\E[Y]\quad &\text{if}\quad \gamma\in(1,2)
\end{cases}\,,
\]
we have, for some \emph{positive} slowly varying function $\ell$,
\begin{multline*}
\cG_{\tilde Y}(s)=1-\Gamma(1-\gamma)\ell(s)s^\gamma+o(\ell(s)s^\gamma) \quad\text{as}\quad s\to 0^+\\
\implies\quad \P(Y>x)\sim \ell(1/x)x^{-\gamma}\quad\text{as}\quad x \to \infty.
\end{multline*}
As a first comment, observe that the change of sign of $\Gamma(1-\gamma)$ when $\gamma>1$ is consistent with~\eqref{eq:GenStable}. Secondly, a direct extension of the Tauberian theorem to \emph{dominated variation}\footnote{ A function is of dominated variation if it is O-regularly varying and monotone. $f(x)$ is \emph{O-regularly varying} if there exist constants $C>1$, $\gamma_1$, $\gamma_2$, $x_0$ such that
\[
\tfrac 1 C a^{\gamma_1}\leq \frac{f(ax)}{f(x)}\leq Ca^{\gamma_2},\quad a\geq 1, \quad x\geq x_0.
\]} 
can be found in~\cite[Thm.~2.10.2]{bingham}.

By the Tauberian theorems for Fourier kernels~\cite[\S~8.1.4]{bingham}, instead, if $\gamma\neq 1$ and for some positive slowly varying function $\ell$ and some $\tilde c\in\C$ with $\Re \tilde c>0$ (in fact $\tilde c\equiv [1-i\theta\,\tan(\tfrac\pi 2\gamma)]$), we can write\footnote{We use the fact that
$
\frac 2 \pi \sin(\tfrac\pi 2 \gamma)\Gamma(\gamma)=\frac 1 {\cos(\tfrac\pi 2\gamma)}\frac{\sin (\pi \gamma)\Gamma(\gamma)}\pi=\frac 1 {\cos(\tfrac\pi 2\gamma)\Gamma(1-\gamma)}.
$}
\begin{align}\label{ape}
&\phi_{\tilde Y}(s)=1-\tilde c\ell(s)s^\gamma+o(\ell(s)s^\gamma) \quad\text{as}\quad s\to 0^+\\
&\nonumber
\hspace{3cm} \implies\quad \P(Y>x)\sim p_+\frac{\ell(1/x)}{\cos(\tfrac\pi 2\gamma)\Gamma(1-\gamma)}x^{-\gamma}\quad\text{as}\quad x \to \infty,\\
&\nonumber
\hspace{3cm} \implies\: \P(Y<-x)\sim p_-\frac{\ell(1/x)}{\cos(\tfrac\pi 2\gamma)\Gamma(1-\gamma)}x^{-\gamma}\quad\text{as}\quad x \to \infty,
\end{align}
where $p_++p_-=1$ with
\[
p_+=\frac{1+\theta}2=\frac 1 2 -\frac{(\Im \tilde c)}{2(\Re \tilde c) \tan(\tfrac\pi 2 \gamma)}\in[0,1].
\]
If $p_\pm=0$, we interpret the result as $o(\ell(1/x)x^{-\gamma})$.
A similar result is obtained for the limit case $\gamma=1$ (see~\cite[Thm.~17(a)]{Soni3}, and~\cite[Thm.~18(a)]{Soni3} with an additional assumption on $\ell$ --- refer to~\cite[Thm.~3.6.8]{bingham}). In particular, when $\Im \tilde c=o(\ell(s)s^\gamma)$ (e.g.~in Proposition~\ref{prop-lengthRW}(D)), we simply have to replace $1/\cos(\gamma\pi/2)$ by $2/\pi$ in \eqref{ape}
(see \cite[Thm.~8.1.10]{bingham}), with the slight abuse of notation $\Gamma(0)\coloneqq 1$.

In light of the aforementioned theorems, as a final comment, let us stress the consistency between~\eqref{eq:CharStable} and~\eqref{eq:GenStable}.

\subsection{Estimates on joint characteristic/generating functions}
\begin{lemma}\label{lemma cauchy}
Assume that, for $k\in \{1,2\}$, $Z_k(s)$ is a complex random variable defined by $Z_k(s)\coloneqq e^{isX_k}$ or $Z_k(s)\coloneqq e^{-sX_k}$, whose average therefore corresponds to the characteristic or generating function of a real or 
non-negative random variable $X_k$.

If $|\E[Z_k(s)]|=1-c_k s^{\g_k}+o(s^{\g_k})$, with $\g_k\in (0,1)$, $c_k\in \mathbbm{R}^+$ and $s\to 0^+$, then by defining $\g\coloneqq \min\{\g_1,\g_2\}$ and assuming $c_1\neq c_2$ if  $\g_1=\g_2$ and $\Im(Z_1(s)),\Im(Z_2(s))\neq 0\,$, we get
\be
1-k_+s^\g+o(s^\g)
\leq|\E[Z_1Z_2(s)]|\leq1-k_-s^\g+o(s^\g)\,,\nonumber
\ee
where the positive constants $k_+\geq k_-$,  matching if $\g_1\neq\g_2$, are functions of $c_1,\,c_2$.
\end{lemma}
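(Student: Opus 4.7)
The plan is to derive the two-sided estimate by applying a complex Cauchy--Schwarz inequality to the covariance structure of $Z_1$ and $Z_2$, and then tracking the leading orders in the parameter $s$.

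The first step is to write the decomposition
\[
\mathbb{E}[Z_1Z_2]-\mathbb{E}[Z_1]\,\mathbb{E}[Z_2]=\mathbb{E}\bigl[(Z_1-\mathbb{E}Z_1)(Z_2-\mathbb{E}Z_2)\bigr],
\]
and invoke the Cauchy--Schwarz inequality in $L^2(\mathbb{P};\mathbb{C})$ to obtain
\[
\bigl|\mathbb{E}[Z_1Z_2]-\mathbb{E}[Z_1]\,\mathbb{E}[Z_2]\bigr|\le \sqrt{V_1(s)\,V_2(s)},\qquad V_k(s):=\mathbb{E}[|Z_k|^2]-|\mathbb{E}[Z_k]|^2.
\]
Combined with the triangle inequality this yields
\[
|\mathbb{E}[Z_1]|\,|\mathbb{E}[Z_2]|-\sqrt{V_1V_2}\le|\mathbb{E}[Z_1Z_2]|\le|\mathbb{E}[Z_1]|\,|\mathbb{E}[Z_2]|+\sqrt{V_1V_2},
\]
which is the backbone of the estimate.

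Next I would compute $V_k(s)$ separately in the two admissible cases. If $Z_k(s)=e^{isX_k}$ then $|Z_k|\equiv 1$, so $V_k(s)=1-|\mathbb{E}[Z_k]|^2=2c_ks^{\gamma_k}+o(s^{\gamma_k})$. If $Z_k(s)=e^{-sX_k}$ with $X_k\ge 0$, then $\mathbb{E}[|Z_k|^2]=\mathbb{E}[e^{-2sX_k}]=1-2^{\gamma_k}c_ks^{\gamma_k}+o(s^{\gamma_k})$ (by rescaling $s\mapsto 2s$ in the expansion of $\mathbb{E}[Z_k(s)]$), which gives $V_k(s)=(2-2^{\gamma_k})c_ks^{\gamma_k}+o(s^{\gamma_k})$. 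In either case $\sqrt{V_1V_2}=A\,s^{(\gamma_1+\gamma_2)/2}+o(s^{(\gamma_1+\gamma_2)/2})$ for an explicit constant $A=A(c_1,c_2,\gamma_1,\gamma_2)$, and $|\mathbb{E}[Z_1]||\mathbb{E}[Z_2]|=1-c_1s^{\gamma_1}-c_2s^{\gamma_2}+o(s^{\gamma})$.

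Finally I split into the two scenarios. If $\gamma_1\ne\gamma_2$, say $\gamma_1<\gamma_2=\gamma$ is false and instead $\gamma=\gamma_1<\gamma_2$, then $(\gamma_1+\gamma_2)/2>\gamma$, so the Cauchy--Schwarz correction $A\,s^{(\gamma_1+\gamma_2)/2}$ is negligible with respect to $s^\gamma$; both bounds collapse to $1-c_1s^{\gamma}+o(s^{\gamma})$ and we may take $k_+=k_-=c_1$. If $\gamma_1=\gamma_2=\gamma$, then all three terms share the same order $s^\gamma$, and the bounds take the form
\[
1-(c_1+c_2+A)s^\gamma+o(s^\gamma)\le|\mathbb{E}[Z_1Z_2]|\le 1-(c_1+c_2-A)s^\gamma+o(s^\gamma),
\]
so $k_+=c_1+c_2+A$ and $k_-=c_1+c_2-A$. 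For generating functions ($Z_k=e^{-sX_k}$) we have $A=(2-2^{\gamma})\sqrt{c_1c_2}$ and AM--GM gives $k_-\ge 2^{\gamma}\sqrt{c_1c_2}>0$ unconditionally; for characteristic functions ($Z_k=e^{isX_k}$) we find $A=2\sqrt{c_1c_2}$, so $k_-=(\sqrt{c_1}-\sqrt{c_2})^2$, which is strictly positive precisely under the extra assumption $c_1\ne c_2$ invoked in the hypotheses. In all cases $k_+\ge k_-$.

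The main obstacle is essentially bookkeeping: making sure the variance computation is done with the correct notion of variance for complex-valued random variables (using $\mathbb{E}[|Z|^2]-|\mathbb{E}[Z]|^2$, not $\mathbb{E}[Z^2]-(\mathbb{E}[Z])^2$), and identifying correctly the regime where the Cauchy--Schwarz remainder $s^{(\gamma_1+\gamma_2)/2}$ is subleading versus comparable to $s^\gamma$, so that the condition $c_1\ne c_2$ for the complex case is pinned down as the threshold guaranteeing $k_->0$.
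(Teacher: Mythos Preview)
Your proof is correct and follows essentially the same route as the paper: the complex covariance decomposition, Cauchy--Schwarz bound $|\cov(Z_1,\bar Z_2)|\le\sqrt{V_1V_2}$ with $V_k=\mathbb{E}|Z_k|^2-|\mathbb{E}Z_k|^2$, the identical variance expansions $(2-2^{\gamma_k})c_ks^{\gamma_k}$ and $2c_ks^{\gamma_k}$ in the two cases, and the identification of $k_-=(\sqrt{c_1}-\sqrt{c_2})^2$ as the worst case when both are characteristic functions. You are in fact slightly more explicit than the paper in spelling out that $(\gamma_1+\gamma_2)/2>\gamma$ forces $k_+=k_-$ when $\gamma_1\neq\gamma_2$, and in invoking AM--GM to show $k_->0$ unconditionally in the generating-function case.
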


\begin{proof}
To easy the notation, from now on we will drop the dependence on $s$ of $Z_k$'s. By definition
$$\E[Z_1Z_2]=\E[Z_1]\E[Z_2]+\cov(Z_1,\bar{Z}_2)\,,$$
where $\bar{Z}_2$ denotes the complex conjugate of $Z_2$ and
$$\cov(Z_1,\bar{Z}_2)\coloneqq \E[(Z_1-\E[Z_1])(Z_2-\E[Z_2])]\,.$$
From the Cauchy-Schwarz inequality, we have
$$|\cov(Z_1,\bar{Z}_2)|\leq \sqrt{\var(Z_1)\var(Z_2)}\,,$$
where
$$\var(Z_k)\coloneqq \E[|Z-\E[Z]|^2]=\E[|Z_k|^2]-|\E[Z_k]|^2\,.$$
In particular, it holds that
\begin{align*}
|\E[Z_1Z_2]|&\leq |\E[Z_1]\E[Z_2]|+|\cov(Z_1,\bar{Z}_2)|\leq |\E[Z_1]\E[Z_2]|+\sqrt{\var(Z_1)\var(Z_2)}\,,\\
|\E[Z_1Z_2]|&\geq |\E[Z_1]\E[Z_2]|-|\cov(Z_1,\bar{Z}_2)|\geq |\E[Z_1]\E[Z_2]|-\sqrt{\var(Z_1)\var(Z_2)}\,.
\end{align*}
Since by assumptions
\be
\var(Z_k)= \E[|Z_k|^2]-[1-c_ks^{\g_k}+o(s^{\g_k})]^2\,,\nonumber
\ee
to determine the behavior of the variance, as $s\to 0^+$, we have to consider
two possible cases:
\begin{itemize}
\item If $Z_k(s)=e^{-sX_k}\,$, then
$$ \E[|Z_k|^2]=\E[Z_k^2]=\E[e^{-2s\,X_k}]=1-c_k(2s)^{\g_k}+o(s^{\g_k})\,,$$
and hence $\var(Z_k)=(2-2^{\g_k})c_ks^{\g_k}+o(s^{\g_k})\,$, with $2-2^{\g_k}\in(0,1)\,$;
\item  If $Z_k(s)=e^{isX_k}\,$, then $\E[|Z_k|^2]=1$, which implies $\var(Z_k)=2c_ks^{\g_k}+o(s^{\g_k})\,$.
\end{itemize}
Let us stress that even if $\E[Z_1],\,\E[ Z_2]$ are both characteristic functions, we get
$$k_-=c_1+c_2- 2\sqrt{c_1c_2}\geq 0\,,$$
with $k_->0$ whenever $c_1\neq c_2\,$.
The statement is therefore proved.
\end{proof}
Notice that Lemma \ref{lemma cauchy} can be easily generalized to a limiting case that is useful for the proof of Lemma~\ref{lemma 1} when $\b=2\,$.
Indeed, assuming that $\g_1< 1\leq \g_2$ and $\E[Z_2]$ is the characteristic function of a non-negative random variable $X_2$, then
$$|\E[Z_1Z_2]|\sim |\E[Z_1]|=1-c_1s^{\g_1}+o(s^{\g_1})\,$$
still holds true.

Moreover, if  we focus on the characteristic functions, we can extend the result to the range $\g_k\in(0,2]$ with $\g\coloneqq \min\{\g_1,\g_2\}<2$.

\subsection{Tail asymptotic of the product of independent random variables}
\begin{lemma}\label{lemma product}
Let $V,\,W$ be non-negative independent random variables characterized by the  asymptotic tails
$$ \P[V>v]\sim c_V\cdot [\log(v)]^{k_V}v^{-\g_V}\,,\qquad \P[W>w]\sim c_W \cdot w^{-\g_W}\,,\qquad \mbox{as}\quad v,w\to+\infty\,,$$
with $\g_V,\g_W\in(0,2)$, $k_V\geq 0$ and $c_V,c_W>0$. It holds that \cite{KV}
\begin{itemize}
\item[(A)] If $\g_V<\g_W$, then
$$\P[V\cdot W>z]\sim c_V\cdot \E[W^{\g_V}]\cdot [\log(z)]^{k_V} z^{-\g_V}\,,\qquad \mbox{as}\quad z \to +\infty\,;$$
\item[(B)] If $\g_V=\g_W\eqqcolon \g$, then
$$ \P[V\cdot W>z]\sim \frac{\g c_Vc_W}{k_V+1}\cdot [\log(z)]^{1+k_V} z^{-\g}\,,\qquad \mbox{as}\quad z \to +\infty\,.$$
\end{itemize}
\end{lemma}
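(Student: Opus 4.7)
The plan is to exploit independence and condition on $W$, writing
\begin{equation*}
\P[VW>z] \;=\; \int_0^\infty \P[V>z/w]\, dF_W(w),
\end{equation*}
and then inserting the tail asymptotics of $V$ under different splits of the integration range adapted to the two cases.

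For case (A), since $\gamma_V<\gamma_W$ the moment $\E[W^{\gamma_V}]$ is finite. I would fix a small $\epsilon>0$ with $(1-\epsilon)\gamma_W>\gamma_V$ and split the integration at $w=z^{1-\epsilon}$. On $\{W\le z^{1-\epsilon}\}$ one has $z/W\ge z^\epsilon\to\infty$ uniformly, so the hypothesis gives $\P[V>z/w]=c_V(1+o(1))[\log(z/w)]^{k_V}(z/w)^{-\gamma_V}$ and $\log(z/w)=(\log z)(1+o(1))$; dominated convergence with integrable envelope $W^{\gamma_V}$ yields a leading contribution $c_V[\log z]^{k_V}z^{-\gamma_V}\E[W^{\gamma_V}]$. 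The complementary integral is bounded by $\P[W>z^{1-\epsilon}]=O(z^{-(1-\epsilon)\gamma_W})=o([\log z]^{k_V}z^{-\gamma_V})$, giving the claimed equivalent.

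For case (B) the moment $\E[W^\gamma]$ diverges and the additional logarithm arises from the "balanced" regime $1\ll w\ll z$ where both $V$ and $W$ sit in their power-law tails. Fixing a large $M$, I would split the integration into $(0,M)$, $[M,z/M]$ and $(z/M,\infty)$. On the middle range $dF_W(w)\sim\gamma c_W w^{-\gamma-1}dw$ and substitution gives
\begin{equation*}
\int_M^{z/M}\P[V>z/w]\,dF_W(w)\;\sim\;\gamma c_V c_W\, z^{-\gamma}\int_M^{z/M}[\log(z/w)]^{k_V}\,\frac{dw}{w}.
\end{equation*}
Changing variables to $u=\log(z/w)$ turns the inner integral into $\int_{\log M}^{\log(z/M)}u^{k_V}du\sim [\log z]^{k_V+1}/(k_V+1)$, which simultaneously produces the exponent $k_V+1$ on the logarithm and the constant $1/(k_V+1)$. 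The boundary pieces contribute at most $\P[V>z/M]=O([\log z]^{k_V}z^{-\gamma})$ and $\P[W>z/M]=O(z^{-\gamma})$, both of smaller order than the main term.

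The main obstacle is upgrading the pointwise tail asymptotics of $V$ and $W$ into \emph{uniform} estimates over the ranges of integration, so that the $(1+o(1))$ corrections can be pulled out of the integrals. This is the classical application of Potter bounds for the slowly varying factors $[\log v]^{k_V}$ and for the tail of $W$; once the uniformity is secured, one lets $z\to\infty$ first and $M\to\infty$ afterwards, and the bookkeeping above delivers both asymptotics in the statement (as in the Breiman-type arguments of \cite{KV}).
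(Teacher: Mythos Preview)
The paper does not actually prove this lemma: it merely records the statement and attributes it to \cite{KV}. Your conditioning-and-splitting argument is the standard Breiman-type derivation and is correct as outlined; in particular the change of variables $u=\log(z/w)$ in case~(B) is exactly what produces both the extra logarithm and the constant $1/(k_V+1)$. One small caveat: in the middle range of case~(B) you write $dF_W(w)\sim\gamma c_W w^{-\gamma-1}\,dw$, which tacitly assumes that $W$ has a density. Since the hypothesis only concerns the tail $\overline F_W(w)=\P[W>w]$, the clean way is to integrate by parts against $\overline F_W$ (or phrase the estimate as a Stieltjes integral) before inserting the asymptotic $\overline F_W(w)\sim c_W w^{-\gamma}$; this is a routine rewriting and is covered by the Potter-bound uniformity you already flag at the end.
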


\end{document}